\documentclass[10pt, conference]{ieeeconf}
\IEEEoverridecommandlockouts
\overrideIEEEmargins
 
\usepackage[english]{babel}

\usepackage{dsfont} 

\usepackage{url}

\usepackage{amsmath}
\usepackage{amsthm}
\usepackage{amssymb}

\usepackage{marvosym} 

\usepackage{tikz}
\usetikzlibrary{arrows,automata,backgrounds,decorations}

\newcommand{\e}[1]{ {\mathrm{e}}^{ #1 } }

\newcommand{\iterand}[2]{ #1^{[#2]} }
\newcommand{\vect}[1]{ \boldsymbol{#1} }
\newcommand{\vectones}[1]{ \vect{1}_{#1} }
\newcommand{\vectelementary}[2]{ \vect{e}_{#1,#2} }
\newcommand{\vectComponent}[2]{ #1_{#2} }
\newcommand{\vectInLine}[1]{ ( #1 )^{\mathrm{T}} }
\newcommand{\matrixElement}[3]{ {#1}_{#2,#3} }

\newcommand{\matrixRow}[2]{ {\vect{#1}}_{ #2 , \cdot } }
\newcommand{\pnorm}[2]{ \| #1 \|{}_{#2} }

\newcommand{\transpose}[1]{ #1{}^{\mathrm{T}} }
\newcommand{\truncate}[2]{ [ {#1} ]^{#2} }

\newcommand{\gradientOperatorWrt}[1]{ \nabla_{#1} }

\newcommand{\criticalpoint}[1]{  #1^{\textnormal{opt}} }
\newcommand{\elementaryvector}[1]{ \vect{e}_{#1} }

\newcommand{\cardinality}[1]{ | #1 | }

\newcommand{\expectation}[1]{ \mathbb{E} [ #1 ] }

\newcommand{\indicator}[1]{ \mathds{1} [ #1 ] }

\newcommand{\probability}[1]{ \mathbb{P} [ #1 ] }

\newcommand{\process}[2]{ \{ #1 \}_{ #2 } }
\newcommand{\totalVariation}[1]{ || #1 ||_{\mathrm{var}} }

\newcommand{\naturalNumbers}{ \mathbb{N} }
\newcommand{\realNumbers}{ \mathbb{R} }
\newcommand{\positiveRealNumbers}{ [0,\infty) }
\newcommand{\strictlyPositiveRealNumbers}{ (0,\infty) }

\newcommand{\sA}{ {\ensuremath{x}} } 
\newcommand{\sB}{ {\ensuremath{y}} } 
\newcommand{\sC}{ {\ensuremath{z}} } 
\newcommand{\sD}{ {\ensuremath{v}} } 

\newcommand{\svA}{ {\sA} }
\newcommand{\svB}{ {\sB} }
\newcommand{\svC}{ {\sC} }
\newcommand{\svD}{ {\sD} }

\newcommand{\stateSpace}{ \Omega }
\newcommand{\itstep}[1]{\iterand{a}{#1}}
\newcommand{\iterror}[1]{\iterand{e}{#1}}
\newcommand{\itfreq}[1]{\iterand{f}{#1}}
\newcommand{\totaltime}[1]{\iterand{t}{#1}}

\newcommand{\normalizationConstant}{Z}

\newcommand{\LipschitzConstant}{c_{\mathrm{l}}}

\newcommand{\gradientConstant}{c_{\mathrm{g}}}


\newcommand{\QuodEratDemonstrandum}{\hfill \ensuremath{\Box}}

\newcommand{\refEquation}[1]{{\textrm{\eqref{#1}}}}

\newcommand{\refTheorem}[1]{{\textrm{Theorem~\ref{#1}}}}

\newcommand{\refProposition}[1]{{\textrm{Proposition~\ref{#1}}}}
\newcommand{\refLemma}[1]{{\textrm{Lemma~\ref{#1}}}}

\newcommand{\refSection}[1]{{\textrm{\S\ref{#1}}}}
\newcommand{\refAppendixSection}[1]{{\textrm{Appendix \ref{#1}}}}

\newtheorem{theorem}{Theorem}
\newtheorem{proposition}{Proposition}
\newtheorem{lemma}{Lemma}

\begin{document}

\title{\LARGE \bf Achievable Performance in Product-Form Networks}

\author{%
Jaron Sanders \\
\normalsize
Eindhoven University of Technology \\
Eindhoven, The Netherlands \\
jaron.sanders@tue.nl
\and
Sem C. Borst \\
\normalsize
Bell Laboratories, Alcatel-Lucent \\
Murray Hill, United States of America \\
sem@research.bell-labs.com
\and
Johan S.H. van Leeuwaarden \\
\normalsize
Eindhoven University of Technology \\
Eindhoven, The Netherlands \\
j.s.h.v.leeuwaarden@tue.nl
}

\maketitle

\begin{abstract}
We characterize the achievable range of performance measures in product-form networks where one or more system parameters can be freely set by a network operator. Given a product-form network and a set of configurable parameters, we identify which performance measures can be controlled and which target values can be attained. We also discuss an online optimization algorithm, which allows a network operator to set the system parameters so as to achieve target performance metrics.  In some cases, the algorithm can be implemented in a distributed fashion, of which we give several examples. Finally, we give conditions that guarantee convergence of the algorithm, under the assumption that the target performance metrics are within the achievable range.
\end{abstract}

\section{Introduction}

Many stochastic systems are modelled using Markov processes. Models range from communication systems, computer networks and data center applications to content dissemination systems and physical or social interactions processes \cite{Kelly85,Liggett85}. In particular, the framework of reversible Markov processes \cite{Kelly79,Kelly91} allows for an extensive analysis of such systems, often geared towards optimizing performance measures such as sojourn times, queue lengths and holding costs \cite{BKMS87,Bremaud99,WK05}.

Instead of optimizing performance measures, we are interested in identifying which performance measures can be achieved. Any performance measure $\vect{\gamma}$ for which there exist finite parameters $\criticalpoint{\vect{r}}$ such that the performance of the system equals $\vect{\gamma}$, is called an achievable target. The collection of achievable targets is called the achievable region. A parameter is anything that changes a transition rate, such as processing speeds, number of servers and job sizes. 

In this paper, we describe which performance measures can be influenced in product-form networks \cite{BCMP79,JLSSS08,Kelly79,Kelly91}, given a list of configurable parameters. Our work makes explicit that the more configurable parameters one has, the more control one can exert on a system. We also identify the achievable region of these performance measures by assuming that parameters are unbounded, and that in this case the achievable region is a convex hull of a set of vectors. Which vectors there are in this set depends on which parameters there are, as well as which states there are in the state space $\stateSpace$.

Using our analysis of the achievable region, operators can know which performance measures the operator influences when changing parameters. By examining the achievable region of a system, an operator can furthermore know which performance measures are achievable. Supposing that an operator wants its system's performance measures to equal a certain achievable target $\vect{\gamma}$, we then proceed to show how the (distributed) online algorithm in \cite{SandersBorstLeeuwaarden2012} can be used to find $\criticalpoint{\vect{r}}$. Related ideas on using online algorithms to optimize networks can be found in \cite{MarbachTsitsiklis2001,MarbachTsitsiklis2003,Puterman94}.

The online algorithm in \cite{SandersBorstLeeuwaarden2012} is a stochastic gradient algorithm \cite{Borkar08,Cao2007,KY03}, known to converge when operators can set parameters in a compact set and when operators choose appropriate step sizes and observation periods. We note however, that the conditions that guarantee convergence as described in \cite{SandersBorstLeeuwaarden2012} are insufficient when parameters are unbounded. The conditions need to be more stringent in order to avoid extreme parameter growth. We shall capitalize on the proof methodology presented in \cite{SandersBorstLeeuwaarden2012} in order to derive sufficient conditions to guarantee convergence with probability one for the application we have in mind here.

In related work, Jiang and Walrand \cite{JiangWalrand2009,JiangWalrand2010} developed a distributed online algorithm that tells nodes in CSMA/CA networks (transmitter-receiver pairs) how to set their activation rates so that their throughput equals a given target. They also identified the achievable region of the throughput under the assumption that activation rates can be unbounded. This paper generalizes and extends their results to the broad class of product-form networks.

This paper discusses two topics and is organized as follows. The first topic is the achievable region. We describe our model in \refSection{sec:Achievable_region__Model_description} and identify the achievable region in \refSection{sec:Theorem_describing_the_achievable_region} for several examples in \refSection{sec:Achievable_region__Examples}. We also provide an in-depth discussion as to how we identified the achievable region in \refSection{sec:Appendix__Proof_of_achievable_region}. The second topic is finding parameters such that the performance measure of the system attains some target value from within the achievable region. We describe how to use an online algorithm to find these parameters in \refSection{sec:Algorithm_description}, and we provide sufficient conditions to guarantee convergence in \refSection{sec:Algorithm__Conditions_for_convergence}. A proof that these conditions are sufficient is then given in \refSection{sec:Algorithm__Convergence_proof}.
\section{Achievable region}
\label{sec:Achievable_region}

Throughout this paper, we denote by $\vectComponent{b}{i}$ the $i$-th element of vector $\vect{b}$. When taking a scalar function of an $n$-dimensional vector $\vect{b}$, we do this element-wise, i.e.~$\exp{ ( \vect{b} ) } = \vectInLine{ \exp \vectComponent{b}{1}, ..., \exp \vectComponent{b}{n} }$. If we have a $\cardinality{\stateSpace}$-dimensional vector $\vect{b}$ in which each element corresponds to some state $\svA \in \stateSpace$, we write $\vectComponent{b}{\svA}$ for that element of $\vect{b}$ that corresponds to state $\svA$.  Similarly, we denote by $\matrixElement{A}{i}{j}$ the element in row $i$, column $j$ of matrix $A$. If rows and/or columns correspond to states in $\stateSpace$, we write $\matrixElement{A}{\svA}{\svB}$ instead. Finally, we denote by $\vectones{n}$ the $n$-dimensional vector of which all elements equal one and by $\vectelementary{n}{i}$ the $n$-dimensional vector with all of its element equalling zero except for a one in the $i$-th position.

\subsection{Model description}
\label{sec:Achievable_region__Model_description}

Consider an irreducible, reversible Markov process $\process{ X(t) }{ t \geq 0 }$ on a finite state space $\stateSpace$ with generator matrix $Q \in \realNumbers^{ \cardinality{\stateSpace} \times \cardinality{\stateSpace} }$. 
We consider cases where the Markov process models a system in which an operator can change one or more transition rates. We assume that an operator only changes transition rates in such a way that the process remains irreducible and reversible, and to avoid trivialities, we assume that there are $d > 0$ configurable transition rates. We call the logarithm of such a configurable transition rate a parameter $\vectComponent{r}{i}$, where $i = 1, ..., d$, and collect all parameters in the vector $\vect{r} = \vectInLine{ \vectComponent{r}{1}, \vectComponent{r}{2}, ..., \vectComponent{r}{d} }$. In other words for $i = 1, ..., d$, there exist $\svA$, $\svB \in \stateSpace$ such that $\vectComponent{r}{i} = \ln \matrixElement{Q}{\svA}{\svB}$.

Under these assumptions, the process has a steady-state distribution $\vect{\pi}(\vect{r})$ that can be written in the product form
\begin{align}
\vect{\pi}(\vect{r}) 
= \frac{1}{\normalizationConstant(\vect{r})} \exp{ ( A \vect{r} + \vect{b} ) }, \label{eqn:Product_form_distribution__ExpAb_form}
\end{align}
where $A \in \realNumbers^{ \cardinality{\stateSpace} \times d}$ is a matrix, $\vect{r} \in \realNumbers^d$, $\vect{b} \in \realNumbers^{ \cardinality{\stateSpace} }$ are vectors and $\normalizationConstant(\vect{r}) = \transpose{  \vectones{\cardinality{\stateSpace}} } \exp{ ( A \vect{r} + \vect{b} ) }$ is the normalization constant. The matrix $A$ tells us not only which, but also by how much parameters influence steady-state probabilities, while the vector $\vect{b}$ contains all kinds of other constants such as logarithms of rates that are no parameters of the systems. 

When operators change parameters, the steady-state probability distribution changes. In particular, $\transpose{A} \vect{\pi}(\vect{r})$ changes, because the elements
\begin{align}
\vectComponent{ \bigl( \transpose{A} \vect{\pi}(\vect{r}) \bigr) }{i}
= \sum_{\svA \in \stateSpace} \matrixElement{A}{\svA}{i} \vectComponent{\pi}{\svA}(\vect{r}), \textrm{ } i = 1, ..., d,  \label{eqn:Aggregate_steady_state_probability_controlled_by_parameter_i}
\end{align}
are aggregates of steady-state probabilities. These aggregates typically have a physical interpretation. For example, if the service rates $\vectComponent{\mu}{i}$ of queues $i = 1, ..., d$ in a closed Jackson network can be controlled and one defines $\vectComponent{r}{i} = \ln \vectComponent{\mu}{i}$, the right-hand side of \refEquation{eqn:Aggregate_steady_state_probability_controlled_by_parameter_i}  reduces to the (negative) mean number of customers in queue $i$. We come back to this and other examples later, and we then make the analysis explicit.

\subsection{Achievable aggregrate probabilities}
\label{sec:Theorem_describing_the_achievable_region}

Given some vector $\vect{\gamma} \in \realNumbers^d$, we are interested in finding finite parameter values $\criticalpoint{\vect{r}}$ such that $\transpose{A} \vect{\pi}(\criticalpoint{\vect{r}}) = \vect{\gamma}$. We call $\vect{\gamma}$ our \emph{target}. It is not a priori clear whether such values exist, but if they exist and if they are finite, we call the target \emph{achievable}. In this paper, we identify a collection of target vectors that are achievable, which we call the \emph{achievable region} $\mathcal{A}$.

\begin{theorem}
\label{thm:Achievable_region}
Any $\vect{\gamma} \in \mathcal{A} = \bigl\{ \transpose{A} \vect{\alpha} \big| \vect{\alpha} \in (0,1)^{\cardinality{\stateSpace}}, \transpose{ \vect{\alpha} } \vectones{\cardinality{\stateSpace}} = 1 \bigr\}$ is achievable. 

\label{thm:Achievable_region_after_affine_transformation}
Furthermore, if $B \in \mathcal{B} = \realNumbers^{n \times d}$, $n \leq d$, is an affine transformation, there exists finite $\criticalpoint{\vect{r}}$ such that $B \transpose{A} \vect{\pi}(\criticalpoint{\vect{r}}) = \vect{\gamma'}$ for all $\vect{\gamma'} \in \bigl\{ B \transpose{A} \vect{\alpha} \big| \vect{\alpha} \in (0,1)^{\cardinality{\stateSpace}}, \transpose{ \vect{\alpha} } \vectones{\cardinality{\stateSpace}} = 1 \bigr\}$.
\end{theorem}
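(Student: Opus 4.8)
The plan is to recast achievability as an unconstrained convex minimization and to recover $\criticalpoint{\vect{r}}$ from a first-order optimality condition. Differentiating \refEquation{eqn:Product_form_distribution__ExpAb_form} yields the identity $\gradientOperatorWrt{\vect{r}} \ln \normalizationConstant(\vect{r}) = \transpose{A} \vect{\pi}(\vect{r})$, since $\partial_{\vectComponent{r}{i}} \ln \normalizationConstant = \sum_{\svA \in \stateSpace} \matrixElement{A}{\svA}{i} \vectComponent{\pi}{\svA}(\vect{r})$ as in \refEquation{eqn:Aggregate_steady_state_probability_controlled_by_parameter_i}. Hence, for a fixed target $\vect{\gamma}$, I would introduce
\[ f(\vect{r}) = \ln \normalizationConstant(\vect{r}) - \innerproduct{\vect{\gamma}}{\vect{r}}, \]
whose gradient is $\gradientOperatorWrt{\vect{r}} f(\vect{r}) = \transpose{A} \vect{\pi}(\vect{r}) - \vect{\gamma}$; thus $\transpose{A} \vect{\pi}(\criticalpoint{\vect{r}}) = \vect{\gamma}$ holds exactly at the stationary points of $f$. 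Because $\ln \normalizationConstant$ is a log-sum-exp of affine functions of $\vect{r}$ it is convex (its Hessian is the covariance matrix of the rows of $A$ under $\vect{\pi}(\vect{r})$, hence positive semidefinite), so $f$ is convex and every stationary point is a global minimizer. The whole proof then reduces to showing that $f$ attains its infimum at a \emph{finite} $\criticalpoint{\vect{r}}$, since finiteness is exactly what ``achievable'' means.

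Next I would set up the convex geometry. The set $\mathcal{A}$ is the image of the open probability simplex under $\transpose{A}$, which equals the relative interior of $C := \{ \transpose{A} \vect{\alpha} \mid \vect{\alpha} \geq 0, \transpose{\vect{\alpha}} \vectones{\cardinality{\stateSpace}} = 1 \}$, the convex hull of the rows of $A$ viewed as points in $\realNumbers^d$ (linear images commute with relative interiors). Two observations drive the argument. First, $f$ is constant along the subspace $N := \{ \vect{u} \in \realNumbers^d \mid A \vect{u} \in \mathrm{span}(\vectones{\cardinality{\stateSpace}}) \}$: if $A \vect{u} = \lambda \vectones{\cardinality{\stateSpace}}$ then $\ln \normalizationConstant(\vect{r} + \vect{u}) = \ln \normalizationConstant(\vect{r}) + \lambda$, while $\innerproduct{\vect{\gamma}}{\vect{u}} = \innerproduct{\transpose{A} \vect{\alpha}}{\vect{u}} = \lambda \transpose{\vect{\alpha}} \vectones{\cardinality{\stateSpace}} = \lambda$ cancels it. Second, the asymptotic slope of $f$ along a direction $\vect{u}$ is $\max_{\svA \in \stateSpace} \vectComponent{(A \vect{u})}{\svA} - \innerproduct{\vect{\gamma}}{\vect{u}}$, which is nonnegative because $\vect{\gamma} \in C$.

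The crux --- and the step I expect to be the main obstacle --- is to show that this asymptotic slope is \emph{strictly} positive in every direction outside $N$, for this is exactly where strict positivity $\vect{\alpha} \in (0,1)^{\cardinality{\stateSpace}}$, i.e.\ $\vect{\gamma} \in \mathrm{ri}(C)$, is indispensable. The slope vanishes in direction $\vect{u}$ iff the linear functional $\innerproduct{\cdot}{\vect{u}}$ attains its maximum over $C$ at the relative-interior point $\vect{\gamma}$, which for a relative-interior point forces that functional to be constant on $C$, hence forces $\vect{u} \in N$. So the zero-slope recession directions of $f$ are precisely $N$, the subspace along which $f$ is already constant. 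Restricting $f$ to $N^{\perp}$, every nonzero direction has strictly positive asymptotic slope; by compactness of the unit sphere in $N^{\perp}$ this slope is bounded below by some $\delta > 0$, so $f|_{N^{\perp}}$ is coercive and attains its minimum at a finite point $\criticalpoint{\vect{r}}$. Since $f$ is constant along $N$, this $\criticalpoint{\vect{r}}$ is a global minimizer on $\realNumbers^d$, and $\gradientOperatorWrt{\vect{r}} f(\criticalpoint{\vect{r}}) = 0$ gives $\transpose{A} \vect{\pi}(\criticalpoint{\vect{r}}) = \vect{\gamma}$. On the relative boundary of $C$ the argument genuinely breaks: a zero-slope direction outside $N$ appears, the infimum is only approached as $\| \vect{r} \| \to \infty$, and no finite $\criticalpoint{\vect{r}}$ exists --- which is why the hypothesis cannot be relaxed.

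Finally, the affine-transformation statement requires no further work. Given $\vect{\gamma}' = B \transpose{A} \vect{\alpha}$ with $\vect{\alpha}$ in the open simplex, I would set $\vect{\gamma} = \transpose{A} \vect{\alpha} \in \mathcal{A}$, invoke the first part to obtain a finite $\criticalpoint{\vect{r}}$ with $\transpose{A} \vect{\pi}(\criticalpoint{\vect{r}}) = \vect{\gamma}$, and left-multiply by $B$ to conclude $B \transpose{A} \vect{\pi}(\criticalpoint{\vect{r}}) = B \vect{\gamma} = \vect{\gamma}'$. The dimension condition $n \leq d$ plays no role in the argument; it merely signals that $B$ aggregates the $d$ controllable quantities into $n \leq d$ reported performance measures.
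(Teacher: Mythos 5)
Your proposal is correct, but it reaches the result by a genuinely different route than the paper. Both arguments reduce achievability to the same unconstrained convex program: your $f(\vect{r}) = \ln \normalizationConstant(\vect{r}) - \innerproduct{\vect{\gamma}}{\vect{r}}$ equals the paper's log-likelihood $u(\vect{r}) = \ln \normalizationConstant(\vect{r}) - \transpose{\vect{\alpha}}(A\vect{r}+\vect{b})$ up to the additive constant $\transpose{\vect{\alpha}}\vect{b}$, and both note that a finite stationary point yields $\transpose{A}\vect{\pi}(\criticalpoint{\vect{r}}) = \transpose{A}\vect{\alpha}$ (\refProposition{prop:Properties_of_the_log_likelihood_function}). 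The divergence is in how existence of a \emph{finite} minimizer is established. The paper goes through Lagrangian duality: it constructs the entropy-like primal problem \refEquation{eqn:Primal_problem} over distributions $\vect{\beta}$ with constraints $\transpose{A}\vect{\beta} = \transpose{A}\vect{\alpha}$ and $\transpose{\vect{1}}\vect{\beta}=1$, gets strong duality from Slater's condition using the feasible point $\vect{\beta} = \vect{\alpha}$, shows the primal optimizer must have strictly positive entries (this is where $\vect{\alpha} \in (0,1)^{\cardinality{\stateSpace}}$ enters, via the infinite boundary derivative of the entropy term, \refProposition{prop:Optimal_value_of_the_dual_problem_is_attained}), and then identifies the dual as $\min_{\vect{r}} u(\vect{r})$ with the finite optimal dual variables being precisely $\criticalpoint{\vect{r}}$ (\refProposition{prop:Dual_problem}). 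You instead prove attainment directly by recession analysis: the constancy subspace $N$, strict positivity of the asymptotic slope off $N$ --- which is exactly where $\vect{\gamma} \in \mathrm{ri}(C)$ enters in your argument --- and coercivity on $N^{\perp}$. Each approach buys something: the duality route exhibits $\vect{\pi}(\criticalpoint{\vect{r}})$ as a maximum-entropy distribution matching the target aggregates and mirrors the Jiang--Walrand methodology the paper generalizes, while your route avoids Slater/complementary-slackness machinery, makes transparent why the claim fails on the relative boundary of $C$, absorbs the non-uniqueness of $\criticalpoint{\vect{r}}$ into $N$, and is in fact more careful in the degenerate case where the hull is lower-dimensional (you correctly work with the relative interior, whereas the paper's remark after the theorem speaks of the interior). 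The one step you should spell out is the passage from strictly positive recession slope to attainment; an explicit way is the bound $\ln \normalizationConstant(\vect{r}) \geq \max_{\svA \in \stateSpace} \vectComponent{(A\vect{r}+\vect{b})}{\svA}$, which gives $f(\vect{r}) \geq \delta \pnorm{\vect{r}}{2} + \min_{\svA \in \stateSpace}\vectComponent{b}{\svA}$ for $\vect{r} \in N^{\perp}$, so level sets of $f|_{N^{\perp}}$ are compact and the minimum is attained. Your one-line treatment of the affine statement coincides with what the paper leaves implicit.
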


Note that $\mathcal{A}$ is the interior of the convex hull of all transposed row vectors of $A$. This can be seen by writing
\begin{align}
&\transpose{A} \vect{\alpha}
= \sum_{i=1}^d \vectComponent{ ( \transpose{A} \vect{\alpha} ) }{i} \vectelementary{d}{i}
= \sum_{i=1}^d \sum_{\svA \in \stateSpace} \matrixElement{A}{\svA}{i} \vectComponent{\alpha}{\svA} \vectelementary{d}{i} \nonumber \\
&= \sum_{\svA \in \stateSpace} \vectComponent{\alpha}{\svA} \bigl( \sum_{i=1}^d \matrixElement{A}{\svA}{i} \vectelementary{d}{i} \bigr)
= \sum_{\svA \in \stateSpace} \vectComponent{\alpha}{\svA} \transpose{ \matrixRow{A}{\svA} },
\end{align}
where $\matrixRow{A}{\svA}$ denotes the row vector in matrix $A$ corresponding to state $\svA$. In the special case of modelling a CSMA/CA network, \cite{JiangWalrand2009,JiangWalrand2010} found that the achievable region of the throughput is the interior of the convex hull of all independent sets of an interference graph. We note that if we capture their system in our notation, $A$ would have all independent sets of the interference graph as row vectors.

Before we prove \refTheorem{thm:Achievable_region}, which is the topic of \refSection{sec:Appendix__Proof_of_achievable_region}, we discuss several examples to which \refTheorem{thm:Achievable_region} can be applied.

\subsection{Examples}
\label{sec:Achievable_region__Examples}

\subsubsection{Finite-state birth-and-death process}

Consider a birth-and-death process on $\stateSpace = \{ 0, 1, 2, ..., n \}$. When the system is in state $\sA \in \{ 0, 1, ..., n-1 \}$, it goes to state $\sA + 1$ after an exponentially distributed time with mean $1 / \vectComponent{\lambda}{\sA+1}$. Similarly, when the system is in state $\sA \in \{ 1, ..., n \}$, it goes to state $\sA - 1$ after an exponentially distributed time with mean $1 / \vectComponent{\nu}{\sA}$. 
The steady-state probability of observing the system in state $\sA \in \stateSpace$ is given by $\vectComponent{\pi}{\sA} = \normalizationConstant^{-1} \prod_{i=1}^{\sA} \vectComponent{\lambda}{i} / \vectComponent{\mu}{i}$.

Suppose that we can change $\vectComponent{\lambda}{i}$ for $i=1, ..., n-1$ in this system. With $\vectComponent{r}{i} = \ln \vectComponent{\lambda}{i}$ we know that
\begin{align}
\vectComponent{\pi}{\sA}(\vect{r}) 
= \frac{1}{Z(\vect{r})} \exp{ \Bigl( \sum_{i=1}^{\sA} \vectComponent{r}{i} - \sum_{i=1}^{\sA} \ln \vectComponent{\mu}{i} \Bigr) },
\end{align}
which corresponds to \refEquation{eqn:Product_form_distribution__ExpAb_form} for $\matrixElement{A}{\sA}{i} = \indicator{ \sA \geq i }$ and $\vectComponent{b}{\svA} = - \sum_{i=1}^{\sA} \ln \vectComponent{\mu}{i}$.

For $i=1,...,n$, the right-hand side of \refEquation{eqn:Aggregate_steady_state_probability_controlled_by_parameter_i} expresses the steady-state probability $\probability{ X_t \geq i }$ and using   \refTheorem{thm:Achievable_region}, we can characterize its achievable region. While the achievable region of $\probability{ X_t \geq i }$ and control thereof is interesting, we want to instead determine the achievable region of each steady-state probability $\probability{ X_t = i }$. For this, we define a matrix $B \in \realNumbers^{n \times n}$ element-wise by setting $\matrixElement{B}{r}{c} = \indicator{ r = c } - \indicator{ r = c + 1 }$ for $r,c = 1, ..., n$. We can then use \refTheorem{thm:Achievable_region_after_affine_transformation} to conclude that for any $\vect{\gamma} \in (0,1)^n$ such that $\sum_{i=1}^n \vectComponent{\gamma}{i} = 1$, there exists $\vect{\lambda} \in \strictlyPositiveRealNumbers^n$ so that $\probability{ X_t = i } = \vectComponent{\gamma}{i}$ for $i=1,...,n$. Note that in this example, one has enough (and appropriate) parameters to control the entire steady-state distribution.


\subsubsection{Closed Jackson network}

Consider a closed Jackson network with $d \in \naturalNumbers$ queues and $n \in \naturalNumbers$ permanent customers. A customer that leaves queue $i \in \{ 1, ..., d \}$, joins queue $j \in \{ 1, ..., d \}$ with probability $\matrixElement{P}{i}{j}$. Customers are served at queue $i$ with rate $\vectComponent{\mu}{i}$. The state space is given by $\stateSpace = \{ \svB \in \naturalNumbers^d | \sum_{i=1}^d \vectComponent{\sB}{i} = n \} $, with $\vectComponent{\sA}{i}$ the number of customers in queue $i$. The steady-state probability of observing state $\svA \in \stateSpace$ is $\vectComponent{\pi}{\svA} = \normalizationConstant^{-1} \prod_{i=1}^d ( \vectComponent{\lambda}{i} / \vectComponent{\mu}{i} )^{\vectComponent{\sA}{i}}$. Here, $\vect{\lambda} \in \realNumbers^d$ is a non-zero solution of $\vect{\lambda} = \vect{\lambda} P$. 

Suppose now that we can change $\vectComponent{\mu}{i}$ for $i=1,...,d$. Define $\vectComponent{r}{i} = \ln \vectComponent{\mu}{i}$ for $i=1,...,d$ so that 
\begin{align}
\vectComponent{\pi}{\svA}(\vect{r}) 
= \frac{1}{Z(\vect{r})} \exp{ \Bigl( - \sum_{i=1}^d \vectComponent{\sA}{i} \vectComponent{r}{i} + \sum_{i=1}^d \vectComponent{\sA}{i} \ln \vectComponent{\lambda}{i} \Bigr) }, \label{eqn:Example__Closed_Jackson_network__Product_form_distribution__Exponential_form}
\end{align}
which is equivalent to \refEquation{eqn:Product_form_distribution__ExpAb_form} when $\matrixElement{A}{\svA}{i} = -\vectComponent{\sA}{i}$ and $\vectComponent{b}{\svA} = \sum_{i=1}^d \vectComponent{\sA}{i} \ln \vectComponent{\lambda}{i}$. 

After substituting $\matrixElement{A}{\svA}{i} = -\vectComponent{\sA}{i}$ into \refEquation{eqn:Aggregate_steady_state_probability_controlled_by_parameter_i} we see that the right-hand side of \refEquation{eqn:Aggregate_steady_state_probability_controlled_by_parameter_i} can be interpreted as the negative steady-state expectation of the number of customers in queue $i$. Using \refTheorem{thm:Achievable_region_after_affine_transformation} with $B = -I$, we conclude that for any 
\begin{align}
\vect{\gamma} 
\in &\bigl\{ \sum_{\svA \in \stateSpace} \vectComponent{\alpha}{\svA} \svA \big| \vect{\alpha} \in (0,1)^{\cardinality{\stateSpace}}, \transpose{ \vect{\alpha} } \vectones{\cardinality{\stateSpace}} = 1 \bigr\},
\end{align}
there exists $\vect{\mu} \in \strictlyPositiveRealNumbers^d$ so that the mean stationary queue lengths are given by $\vectComponent{\gamma}{1}, ..., \vectComponent{\gamma}{d}$.

\subsubsection{CSMA network on a partite graph}

Consider the following stylized model for a CSMA network \cite{ZoccaBorstLeeuwaarden2012}. Suppose there are $\vectComponent{n}{k}$ class-$k$ nodes, with $k = 1, ..., K$. If a node is active (transmitting) of say class-$c$, all nodes of classes $k \neq c$ cannot activate (begin transmitting). Class-$c$ nodes, however, can activate. Nodes of class-$k$, $k = 1, ..., K$, try to activate after an exponentially distributed time with mean $1 / \vectComponent{\nu}{k}$. After successfully activating, a node deactivates after an exponentially distributed time with mean $1$.

We will keep track of the number of active nodes and which class they belong to. Specifically, let $(k,l)$ denote the state in which $l$ class-$k$ nodes are active, $l \in \{ 0, 1, ..., \vectComponent{n}{k} \}$ and $k \in \{ 1, ..., K \}$. The equilibrium distribution of this Markov process is given by $\vectComponent{\pi}{(k,l)} = \normalizationConstant^{-1} \tbinom{ \vectComponent{n}{k} }{ l } \vectComponent{\nu}{k}^l$.

Now consider the following two control schemes: (i) the operator can choose any $\nu \in (0,\infty)$ and set $\vectComponent{\nu}{k} = \nu$ for $k = 1, ..., K$, and (ii) the operator can set any $\vect{\nu} \in (0, \infty)^K$. Intuitively, the operator has less control over the network with scheme (i) than with scheme (ii). In practice, however, there could be compelling reasons to use scheme (i) instead of scheme (ii), such as reduced complexity and/or lower operation costs. In such situations, it is worthwhile to examine and compare the achievable regions of all available schemes.

For scheme (i), we identify $r = \ln \nu$ and write
\begin{align}
\vectComponent{\pi}{(k,l)}(r) 
= \frac{1}{\normalizationConstant(r)} \exp{ \bigl( l r + \ln \tbinom{ \vectComponent{n}{k} }{ l } \bigr) }
\end{align}
which is equivalent to \refEquation{eqn:Product_form_distribution__ExpAb_form} when $\vectComponent{A}{(k,l)} = l$ and $\vectComponent{b}{(k,l)} = \ln \binom{ \vectComponent{n}{k} }{ l }$. The right-hand side of \refEquation{eqn:Aggregate_steady_state_probability_controlled_by_parameter_i} can then be interpreted as the steady-state average number of active nodes. Using \refTheorem{thm:Achievable_region}, we conclude that for every $\gamma \in (0, \max_{k = 1, ..., K} \vectComponent{n}{k} )$, there exists a $\nu \in (0,\infty)$ such that $\sum_{k=1}^K \sum_{l=1}^{\vectComponent{n}{k}} l \vectComponent{\pi}{(k,l)} = \gamma$. In other words, by using control scheme (i) and setting $\nu$ appropriately, the average number of active nodes can be made to equal anything between $0$ and $\max_{k = 1, ..., K} \vectComponent{n}{k}$.

In the case of scheme (ii), identify $\vectComponent{r}{i} = \ln \vectComponent{\nu}{i}$ for $i = 1, ..., K$, so that
\begin{align}
\vectComponent{\pi}{(k,l)}(\vect{r}) 
= \frac{1}{\normalizationConstant(\vect{r})} \exp{ \bigl( l \vectComponent{r}{k} + \ln \tbinom{ \vectComponent{n}{k} }{ l } \bigr) }.
\end{align}
Again if we compare to \refEquation{eqn:Product_form_distribution__ExpAb_form}, we see that $\matrixElement{A}{(k,l)}{i} = l \indicator{ k = i }$ and $\vectComponent{b}{(k,l)} = \ln \tbinom{ \vectComponent{n}{k} }{ l }$. Furthermore, \refEquation{eqn:Aggregate_steady_state_probability_controlled_by_parameter_i} is to be interpreted as the steady-state average number of active class-$i$ nodes, where $i=1,...,k$. Using \refTheorem{thm:Achievable_region}, we conclude that for every 
\begin{align}
\vect{\gamma} 
&\in \bigl\{ \sum_{i=1}^K \sum_{l=1}^{\vectComponent{n}{i}} l \vectComponent{\alpha}{(i,l)} \elementaryvector{i} | \vect{\alpha} \in (0,1)^{\cardinality{\stateSpace}}, \transpose{ \vect{\alpha} } \vectones{\cardinality{\stateSpace}} = 1 \bigr\} \nonumber \\
&= \bigl\{ \sum_{k=1}^K \vectComponent{\beta}{k} \vectComponent{n}{k} \elementaryvector{k} | \vect{\beta} \in (0,1)^{K}, \transpose{ \vect{\beta} } \vectones{K} \leq 1 \bigr\}, \label{eqn:CSMA_network_partite_graph__Achievable_region_Scheme_ii}
\end{align}
there exists a $\vect{\nu} \in (0,\infty)^K$ such that $\sum_{l=1}^{\vectComponent{n}{i}} l \vectComponent{\pi}{(i,l)} = \vectComponent{\gamma}{i}$ for $i = 1, ..., K$. We see that by choosing $\vect{\nu}$ appropriately, the average number of active nodes \emph{of every class} can be controlled. As expected, more control can be exerted on the network with scheme (ii) than with scheme (i).

The achievable region given in \refEquation{eqn:CSMA_network_partite_graph__Achievable_region_Scheme_ii} can intuitively be understood as follows. When precisely one element of $\vect{\nu}$ becomes extremely large, say $\vectComponent{\nu}{k} \rightarrow \infty$, all class-$k$ nodes are active almost always and the average number of active nodes would be $\vectComponent{n}{k} \elementaryvector{k}$. If two or more elements become extremely large, say $\vectComponent{\nu}{k_1}, \vectComponent{\nu}{k_2} \rightarrow \infty$, a large number of class-$k_1$ and class-$k_2$ nodes would be active for a large fraction of time (but never simultaneously). The average number of active nodes is then a weighted average of $\vectComponent{n}{k_1} \elementaryvector{k_1}$ and $\vectComponent{n}{k_2} \elementaryvector{k_2}$.

\begin{figure}[!hbtp]
\begin{center}
\begin{tikzpicture}[ ->, >=stealth', shorten >=1pt, auto, node distance=2.25cm, semithick, sloped ]
\tikzstyle{every state} = [ fill=white, draw=black, thick, text=black, scale=0.625]
\footnotesize
\node[state] (s20) {$~0~$};
\node[state] (s10) [above of=s20, draw=none] {};
\node[state] (s30) [below of=s20, draw=none] {};
\node[state] (s11) [right of=s10] {$1,1$};
\node[state] (s21) [right of=s20] {$2,1$};
\node[state] (s31) [right of=s30] {$3,1$};
\node[state] (s12) [right of=s11] {$1,2$};
\node[state] (s22) [right of=s21] {$2,2$};
\node[state] (s32) [right of=s31] {$3,2$};
\node[state] (s23) [right of=s22] {$2,3$};
\node[state] (s33) [right of=s32] {$3,3$};
\node[state] (s24) [right of=s23] {$2,4$};
\node[state] (s25) [right of=s24] {$2,5$};
\path (s20) edge  [bend right=-25] node[below] {$2 \vectComponent{\nu}{1}$} (s11);
\path (s11) edge  [bend right=45] node[above] {$1$} (s20);
\path (s11) edge  [bend right=10] node[below] {$\vectComponent{\nu}{1}$} (s12);
\path (s12) edge  [bend right=10] node[above] {$2$} (s11);
\path (s20) edge  [bend right=10] node[below] {$5 \vectComponent{\nu}{2}$} (s21);
\path (s21) edge  [bend right=10] node[above] {$1$} (s20);
\path (s21) edge  [bend right=10] node[below] {$4 \vectComponent{\nu}{2}$} (s22);
\path (s22) edge  [bend right=10] node[above] {$2$} (s21);
\path (s22) edge  [bend right=10] node[below] {$3 \vectComponent{\nu}{2}$} (s23);
\path (s23) edge  [bend right=10] node[above] {$3$} (s22);
\path (s23) edge  [bend right=10] node[below] {$2 \vectComponent{\nu}{2}$} (s24);
\path (s24) edge  [bend right=10] node[above] {$4$} (s23);
\path (s24) edge  [bend right=10] node[below] {$\vectComponent{\nu}{2}$} (s25);
\path (s25) edge  [bend right=10] node[above] {$5$} (s24);
\path (s20) edge  [bend right=45] node[below] {$3 \vectComponent{\nu}{3}$} (s31);
\path (s31) edge  [bend right=-25] node[above] {$1$} (s20);
\path (s31) edge  [bend right=10] node[below] {$2 \vectComponent{\nu}{3}$} (s32);
\path (s32) edge  [bend right=10] node[above] {$2$} (s31);
\path (s32) edge  [bend right=10] node[below] {$\vectComponent{\nu}{3}$} (s33);
\path (s33) edge  [bend right=10] node[above] {$3$} (s32);
\end{tikzpicture}
\end{center}
\caption{\textrm{Process describing a CSMA network on a partite graph with $K=3$ classes and $\vect{n} = \vectInLine{2,5,3}$.}}
\end{figure}

\subsection{Proof of \refTheorem{thm:Achievable_region}}
\label{sec:Appendix__Proof_of_achievable_region}

In this section, we prove \refTheorem{thm:Achievable_region}. Our method is based on \cite{JiangWalrand2009,JiangWalrand2010}, where the achievable region of the throughput of nodes (transmitter-receiver pairs) in a CSMA/CA network has been determined. We apply the approach to the broader class of product-form networks and provide all necessary adaptations, which leads to \refTheorem{thm:Achievable_region}. A proof sketch is as follows. First, we construct a convex minimization problem in $\vect{r}$ for every vector $\vect{\alpha} \in (0,1)^{\cardinality{\stateSpace}}$ such that $\transpose{\vect{1}} \vect{\alpha} = 1$. This minimization problem is constructed such that in the minimum located at $\criticalpoint{\vect{r}}$, $\transpose{A} \vect{\pi}(\criticalpoint{\vect{r}}) = \transpose{A} \vect{\alpha}$. Next, we show that strong duality holds and that the dual problem of our minimization problem attains its optimal value. This then implies that the target $\vect{\gamma} = \transpose{A} \vect{\alpha}$ is achievable.

We now proceed with the proof. Let $\ln \vect{x} = \vectInLine{ \ln \vectComponent{x}{1}, ..., \ln \vectComponent{x}{n} }$ for $\vect{x} \in \realNumbers^n$ and define the log-likelihood function $u(\vect{r}) = - \transpose{\vect{\alpha}} \ln \vect{\pi}(\vect{r})$, with $\vect{\alpha} \in (0,1)^{\cardinality{\stateSpace}}$ and $\transpose{\vect{1}} \vect{\alpha} = 1$. After substituting \refEquation{eqn:Product_form_distribution__ExpAb_form}, we find that
\begin{align}
u(\vect{r}) 
= \ln Z(\vect{r}) - \transpose{\vect{\alpha}} ( A \vect{r} + \vect{b} ). \label{eqn:Log_likelihood_function}
\end{align}
Let $\vect{g}(\vect{r}) = \nabla_{\vect{r}} u(\vect{r})$ with $\nabla_{\vect{r}} = \vectInLine{ \partial / \partial \vectComponent{r}{1}, ..., \partial / \partial \vectComponent{r}{d} }$, so that
\begin{align}
\vect{g}(\vect{r}) = \transpose{A} \vect{\pi}(\vect{r}) - \transpose{A} \vect{\alpha}. \label{eqn:Derivative_of_g_as_vector_difference}
\end{align}
The log-likelihood function in \refEquation{eqn:Log_likelihood_function} has the aforementioned properties that we are interested in.

\begin{proposition}
\label{prop:Properties_of_the_log_likelihood_function}
The function $u(\vect{r}) = - \transpose{\vect{\alpha}} \ln \vect{\pi}(\vect{r})$ has the properties that \textnormal{(i)} $\inf_{\vect{r} \in \realNumbers^d} u(\vect{r}) \geq 0$, \textnormal{(ii)} $u(\vect{r})$ is continuous in $\vect{r}$, \textnormal{(iii)} $u(\vect{r})$ is convex in $\vect{r}$ and \textnormal{(iv)} at the critical point $\criticalpoint{\vect{r}}$ for which $g(\criticalpoint{\vect{r}}) = \vect{0}$, $\transpose{A} \vect{\pi}(\criticalpoint{\vect{r}}) = \transpose{A} \vect{\alpha}$.
\end{proposition}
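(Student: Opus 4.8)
The plan is to verify the four properties more or less in reverse order of difficulty, since three of them follow almost immediately from the product form \refEquation{eqn:Product_form_distribution__ExpAb_form} and the gradient \refEquation{eqn:Derivative_of_g_as_vector_difference}. Property (iv) is the most direct: the gradient has already been computed as $\vect{g}(\vect{r}) = \transpose{A}\vect{\pi}(\vect{r}) - \transpose{A}\vect{\alpha}$, so at any critical point $\criticalpoint{\vect{r}}$ with $\vect{g}(\criticalpoint{\vect{r}}) = \vect{0}$ I simply read off $\transpose{A}\vect{\pi}(\criticalpoint{\vect{r}}) = \transpose{A}\vect{\alpha}$ by rearranging, with no further work required.

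For property (i) I would prove pointwise nonnegativity, which is stronger than the stated infimum bound. Writing $u(\vect{r}) = -\transpose{\vect{\alpha}}\ln\vect{\pi}(\vect{r}) = -\sum_{\svA \in \stateSpace}\vectComponent{\alpha}{\svA}\ln\vectComponent{\pi}{\svA}(\vect{r})$, I would use that $\vect{\pi}(\vect{r})$ is a genuine probability distribution on the finite space $\stateSpace$ (guaranteed by irreducibility and reversibility), so each $\vectComponent{\pi}{\svA}(\vect{r}) \in (0,1]$ and hence $\ln\vectComponent{\pi}{\svA}(\vect{r}) \leq 0$. Since also $\vectComponent{\alpha}{\svA} > 0$, every summand $-\vectComponent{\alpha}{\svA}\ln\vectComponent{\pi}{\svA}(\vect{r})$ is nonnegative, so $u(\vect{r}) \geq 0$ for every $\vect{r} \in \realNumbers^d$ and therefore $\inf_{\vect{r}}u(\vect{r}) \geq 0$. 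Equivalently, $u$ is the cross-entropy of $\vect{\alpha}$ relative to $\vect{\pi}(\vect{r})$, which decomposes into the nonnegative entropy of $\vect{\alpha}$ plus a nonnegative Kullback--Leibler divergence.

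For properties (ii) and (iii) I would work from the explicit form $u(\vect{r}) = \ln Z(\vect{r}) - \transpose{\vect{\alpha}}(A\vect{r}+\vect{b})$ in \refEquation{eqn:Log_likelihood_function}. The term $\transpose{\vect{\alpha}}(A\vect{r}+\vect{b})$ is affine in $\vect{r}$, hence both continuous and convex. The remaining term $\ln Z(\vect{r}) = \ln\sum_{\svA\in\stateSpace}\exp(( A\vect{r}+\vect{b} )_{\svA})$ is a finite, strictly positive sum of exponentials of affine functions, so it is $C^{\infty}$ and in particular continuous, which gives (ii). For convexity I would compute the Hessian of $\ln Z$ and recognize it as a covariance matrix under $\vect{\pi}(\vect{r})$: for any direction $\vect{w}\in\realNumbers^d$ one finds $\transpose{\vect{w}}(\nabla^2_{\vect{r}}\ln Z(\vect{r}))\vect{w} = \varianceWrt{\innerproduct{\matrixRow{A}{X}}{\vect{w}}}{\pi}$, where $X$ is a random state distributed according to $\vect{\pi}(\vect{r})$. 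A variance is nonnegative, so the Hessian is positive semidefinite, $\ln Z$ is convex, and adding the affine term preserves convexity, which gives (iii).

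I expect the only step requiring genuine care to be the Hessian computation in (iii): one must differentiate $\ln Z(\vect{r})$ twice and check that the second derivatives reorganize exactly into the second moment minus the outer product of first moments of the rows of $A$ under $\vect{\pi}(\vect{r})$. This is the standard cumulant-generating-function identity for exponential families, but it is worth carrying out explicitly to confirm the positive semidefinite structure. All other parts are immediate consequences of $\vect{\pi}(\vect{r})$ being a probability vector and of the gradient formula already established, so I anticipate no additional obstacle.
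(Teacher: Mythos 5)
Your proof is correct, and for parts (i), (ii) and (iv) it follows essentially the paper's own argument: (iv) is read off from the gradient formula \refEquation{eqn:Derivative_of_g_as_vector_difference}, (ii) is continuity of a composition of continuous functions, and (i) rests on $\vectComponent{\pi}{\svA}(\vect{r}) \in (0,1]$ together with $\vectComponent{\alpha}{\svA} > 0$ --- in fact your version of (i) is slightly more careful than the paper's, which invokes only $\vectComponent{\pi}{\svA} > 0$ (irreducibility) and $\vectComponent{\alpha}{\svA} > 0$ and leaves the essential upper bound $\vectComponent{\pi}{\svA}(\vect{r}) \leq 1$ implicit. The genuine difference is in (iii): the paper writes $u(\vect{r}) = v(A\vect{r}+\vect{b})$ with $v(\vect{s}) = \ln \bigl( \sum_{\svA \in \stateSpace} \exp \vectComponent{s}{\svA} \bigr) - \transpose{\vect{\alpha}} \vect{s}$ and simply cites the convexity of the log-sum-exp function and the preservation of convexity under affine composition, whereas you prove convexity from first principles by differentiating $\ln Z$ twice and identifying the Hessian as a covariance matrix under $\vect{\pi}(\vect{r})$, so that $\transpose{\vect{w}} \bigl( \nabla^2_{\vect{r}} \ln Z(\vect{r}) \bigr) \vect{w}$ is a variance and hence nonnegative; your computation is the standard cumulant-generating-function identity and is correct. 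Both routes are valid: the paper's is shorter and defers the analytic work to a textbook reference, while yours is self-contained and yields more information as a by-product --- it shows $u$ is $C^\infty$ and makes visible exactly when the Hessian is strictly positive definite (namely when no nontrivial linear combination of the columns of $A$ is constant over $\stateSpace$), which is the fact underlying the uniqueness of the critical point $\criticalpoint{\vect{r}}$ that the paper later asserts without comment in \refSection{sec:Algorithm_description}.
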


\begin{proof}
\textnormal{(i)} By irreducibility, we have that $\vectComponent{\pi}{\svA} > 0$ for all $\svA \in \stateSpace$. Because $\vectComponent{\alpha}{\svA} > 0$ for all $\svA \in \stateSpace$, $\inf_{\vect{r} \in \realNumbers^d} u(\vect{r}) = \inf_{\vect{r} \in \realNumbers^d} - \transpose{\vect{\alpha}} \ln \vect{\pi}(\vect{r}) \geq 0$.
\textnormal{(ii)} Being a composition of continuous functions in $\vect{r}$, $u(\vect{r})$ is continuous in $\vect{r}$.
\textnormal{(iii)} Being a composition of a convex log-sum-exp function $v(\vect{s}) = \ln \bigl( \sum_{\svA \in \stateSpace} \exp{\vectComponent{s}{\svA}} \bigr) - \transpose{\vect{\alpha}} \vect{s}$ and an affine transformation $A\vect{r} + \vect{b}$, $u(\vect{r}) = v( A\vect{r} + \vect{b} )$ is convex in $\vect{r}$ \cite{BoydVandenberghe2004}. 
\textnormal{(iv)} This follows after equating \refEquation{eqn:Derivative_of_g_as_vector_difference} to $\vect{0}$.
\end{proof}

We next prove that there exists a \emph{finite} $\criticalpoint{\vect{r}}$ for which $g(\criticalpoint{\vect{r}}) = \vect{0}$. Using \refProposition{prop:Properties_of_the_log_likelihood_function}, we conclude that if such a vector exists, then $\transpose{A} \vect{\pi}(\criticalpoint{\vect{r}}) = \transpose{A} \vect{\alpha}$. In other words, the target vector $\transpose{A} \vect{\alpha}$ is attained by setting $\vect{r} = \criticalpoint{\vect{r}}$.

Consider the following optimization problem, which we call the primal problem. 
\begin{align}
\begin{array}{cc}
\textrm{maximize} & -\transpose{ \vect{\beta} } \ln \vect{\beta} + \transpose{ ( \vect{\beta} - \vect{\alpha} )} \vect{b}, \\
\textnormal{over} & \vect{\beta} \in (0,1)^{\cardinality{\stateSpace}}, \\
\textnormal{subject to} & \transpose{A} \vect{\beta} = \transpose{A} \vect{\alpha}, \transpose{\vect{1}} \vect{\beta} = 1. \\
\end{array}
\label{eqn:Primal_problem}
\end{align}
It has been constructed in such a way that its dual is the minimization of $u(\vect{r})$ over $\vect{r}$. This is by design, and we will use and prove this throughout the remainder of this section. We note first that \refEquation{eqn:Primal_problem} differs from the minimization problem considered in \cite{JiangWalrand2009,JiangWalrand2010}, in that there is a second term $\transpose{ ( \vect{\beta} - \vect{\alpha} )} \vect{b}$ necessary to capture the broader class of product-form networks and to allow for the selection of parameters.

Before we can prove that the minimization of $u(\vect{r})$ over $\vect{r}$ is indeed the dual to \refEquation{eqn:Primal_problem}, we need to verify that strong duality holds.

\begin{lemma}
Strong duality holds.
\end{lemma}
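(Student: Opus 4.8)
The plan is to recognize \refEquation{eqn:Primal_problem} as a convex program whose only constraints are affine, and then to invoke Slater's condition in the refined form that applies when all constraints are affine. Written as a minimization, the primal asks to minimize $\transpose{\vect{\beta}} \ln \vect{\beta} - \transpose{(\vect{\beta} - \vect{\alpha})} \vect{b}$ over the open box $\vect{\beta} \in (0,1)^{\cardinality{\stateSpace}}$ subject to the linear equalities $\transpose{A} \vect{\beta} = \transpose{A} \vect{\alpha}$ and $\transpose{\vectones{\cardinality{\stateSpace}}} \vect{\beta} = 1$. The objective is convex, since $\transpose{\vect{\beta}} \ln \vect{\beta} = \sum_{\svA \in \stateSpace} \vectComponent{\beta}{\svA} \ln \vectComponent{\beta}{\svA}$ is the (convex) negative-entropy function, the Fenchel conjugate of the log-sum-exp function appearing in \refProposition{prop:Properties_of_the_log_likelihood_function}, and the remaining term is affine; the feasible region is the intersection of the convex box with an affine subspace, hence convex. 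This places us squarely in the regime where Slater's condition yields strong duality \cite{BoydVandenberghe2004}.

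First I would record that the feasible set is nonempty and that the objective is bounded below on it, so the primal optimal value is finite: the negative entropy is bounded on the box, and the affine term is bounded because $\vect{\beta}$ ranges over a bounded set. Next, and this is the crux, I would exhibit a feasible point lying in the relative interior of the domain. The natural candidate is $\vect{\beta} = \vect{\alpha}$ itself: it satisfies $\transpose{A} \vect{\beta} = \transpose{A} \vect{\alpha}$ trivially and $\transpose{\vectones{\cardinality{\stateSpace}}} \vect{\alpha} = 1$ by the standing hypothesis on $\vect{\alpha}$, and it lies in the open box $(0,1)^{\cardinality{\stateSpace}}$ precisely because $\vect{\alpha} \in (0,1)^{\cardinality{\stateSpace}}$. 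Treating the open box as the effective domain of the negative-entropy objective, the only explicit constraints are the two affine equalities.

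Because every such constraint is affine, the refined Slater condition does not demand a strictly feasible point for the equalities — which would be impossible — but only a feasible point in the relative interior of the effective domain. The point $\vect{\beta} = \vect{\alpha}$ supplies exactly this: the intersection of the open box with the affine subspace cut out by $\transpose{A} \vect{\beta} = \transpose{A} \vect{\alpha}$ and $\transpose{\vectones{\cardinality{\stateSpace}}} \vect{\beta} = 1$ is relatively open, so it coincides with its own relative interior, and $\vect{\alpha}$ belongs to it. Invoking the strong-duality theorem for convex programs with affine constraints \cite{BoydVandenberghe2004}, I conclude that the duality gap is zero and, the primal value being finite, that the dual optimum is attained. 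I expect the only delicate point to be this domain subtlety: because the box is open one must phrase Slater's condition relative to the effective domain rather than ask for strict inequalities, and the relative interior must be taken along the constraint subspace; checking that $\vect{\alpha}$ genuinely lies there is what makes the argument go through and, in the remainder of the section, what lets the attained dual be identified with $\criticalpoint{\vect{r}}$ minimizing $u(\vect{r})$.
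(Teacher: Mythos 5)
Your proof is correct and follows essentially the same route as the paper: the paper's own argument is precisely to invoke Slater's condition from \cite{BoydVandenberghe2004} with the feasible point $\vect{\beta} = \vect{\alpha}$. Your version is simply more careful about the hypotheses (convexity of the negative-entropy objective, and the refined form of Slater's condition requiring a point in the relative interior of the open box since all explicit constraints are affine), a subtlety the paper glosses over but which $\vect{\alpha} \in (0,1)^{\cardinality{\stateSpace}}$ resolves exactly as you describe.
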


\begin{proof}
Slater's condition \cite{BoydVandenberghe2004} tells us that strong duality holds if there exists a $\vect{\beta}$ such that all constraints hold. Considering $\vect{\beta} = \vect{\alpha}$ completes the proof.
\end{proof}
Strong duality implies that the optimality gap is zero, specifically implying that if \refEquation{eqn:Primal_problem} has a finite optimum, its dual also attains a finite optimum. 

\begin{proposition}
\label{prop:Optimal_value_of_the_dual_problem_is_attained}
The optimal value of the dual problem is attained.
\end{proposition}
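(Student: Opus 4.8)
The plan is to deduce attainment of the dual optimum from attainment of the \emph{primal} optimum in \refEquation{eqn:Primal_problem}, rather than from strong duality alone: equality of the optimal values (the content of the preceding lemma) does not by itself force the minimizer of $u$ to be finite. First I would record that the feasible set of \refEquation{eqn:Primal_problem}, namely $\{\vect{\beta} \geq \vect{0} : \transpose{A}\vect{\beta} = \transpose{A}\vect{\alpha},\ \transpose{\vectones{\cardinality{\stateSpace}}}\vect{\beta} = 1\}$, is a closed and bounded subset of the probability simplex, hence compact, and that the objective extends continuously to its closure once one sets $0\ln 0 = 0$. By Weierstrass the primal maximum is then attained at some $\criticalpoint{\vect{\beta}}$, and in particular the primal optimal value is finite.

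The crux is to show that $\criticalpoint{\vect{\beta}}$ lies in the interior, i.e. $\criticalpoint{\vect{\beta}} \in \strictlyPositiveRealNumbers^{\cardinality{\stateSpace}}$. This is where the entropy term is essential: the partial derivative of $-\transpose{\vect{\beta}}\ln\vect{\beta} + \transpose{(\vect{\beta}-\vect{\alpha})}\vect{b}$ with respect to $\vectComponent{\beta}{\svA}$ equals $-\ln\vectComponent{\beta}{\svA} - 1 + \vectComponent{b}{\svA}$, which tends to $+\infty$ as $\vectComponent{\beta}{\svA}\downarrow 0$. Since $\vect{\alpha}$ is a strictly positive feasible point, the direction $\vect{\alpha} - \criticalpoint{\vect{\beta}}$ is feasible (it is annihilated by $\transpose{A}$ and by $\transpose{\vectones{\cardinality{\stateSpace}}}$), and any candidate maximizer with a vanishing coordinate can be strictly improved by moving a small amount of mass along it: the entropy gain $\sim -\varepsilon\ln\varepsilon$ beats the bounded linear change. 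Hence no boundary point is optimal and $\criticalpoint{\vect{\beta}} > \vect{0}$. (Strict concavity of the entropy also makes $\criticalpoint{\vect{\beta}}$ unique, though uniqueness is not needed here.)

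With an interior maximizer in hand, finite Lagrange multipliers for the two affine equality constraints must exist. Indeed, at $\criticalpoint{\vect{\beta}}$ every feasible direction $\vect{\delta}$ satisfies $\transpose{A}\vect{\delta}=\vect{0}$ and $\transpose{\vectones{\cardinality{\stateSpace}}}\vect{\delta}=0$, and optimality forces the objective gradient to annihilate all such $\vect{\delta}$; therefore the gradient lies in the span of the columns of $A$ together with $\vectones{\cardinality{\stateSpace}}$. This yields a finite vector $\criticalpoint{\vect{r}}\in\realNumbers^d$ (and a scalar for the normalization constraint) solving the stationarity condition, and a direct check recovers $\criticalpoint{\vect{\beta}}$ as the product-form vector $\vect{\pi}(\criticalpoint{\vect{r}})$. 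Because these multipliers meet the weak-duality bound, they are dual optimal, so $u$ attains its minimum at the finite point $\criticalpoint{\vect{r}}$; combined with \refProposition{prop:Properties_of_the_log_likelihood_function}(iv) this gives $\transpose{A}\vect{\pi}(\criticalpoint{\vect{r}}) = \transpose{A}\vect{\alpha}$.

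I expect the interiority step to be the main obstacle, and it is also what makes a softer coercivity argument fragile: whenever the columns of $A$ are linearly dependent, or $\vectones{\cardinality{\stateSpace}}$ lies in their span, $u$ is constant along a nontrivial subspace — adding any $\vect{r}$ with $A\vect{r}\in\mathrm{span}(\vectones{\cardinality{\stateSpace}})$ merely rescales $\normalizationConstant(\vect{r})$ and shifts $\transpose{\vect{\alpha}}A\vect{r}$ by the same amount — so $u$ need not be coercive and continuity plus growth at infinity does not suffice. Routing the argument through the attained, interior primal optimum and its finite multipliers sidesteps this and directly produces a finite $\criticalpoint{\vect{r}}$; equivalently, one may invoke the refined form of Slater's condition, under which a convex program with finite optimal value has its dual optimum attained.
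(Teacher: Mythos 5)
Your proof is correct, and its central step --- ruling out zero coordinates of the primal maximizer by moving mass toward the strictly positive feasible point $\vect{\alpha}$, with the blow-up of the entropy derivative providing a strict improvement --- is exactly the paper's argument (the paper phrases it as the directional derivative along $\vect{\alpha} - \vect{\chi}$ being $+\infty$ at any optimum with a vanishing coordinate, yielding a contradiction). Where you diverge is in the scaffolding around that step, and your choices tighten the argument. First, you prove that a primal maximizer exists at all, by extending the objective to the compact closure of the feasible set (with $0 \ln 0 = 0$) and invoking Weierstrass; the paper begins ``assume that the optimal solution $\vect{\chi}$ \dots'' without justifying existence, which is a tacit gap since the stated feasible region is relatively open. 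Second, the paper finishes by citing Slater's condition in its refined form (a Slater point plus a finite primal value yields dual attainment), whereas you construct the finite multipliers explicitly: at an interior maximizer of a problem with only affine constraints, the objective gradient must lie in the span of the columns of $A$ and $\vectones{\cardinality{\stateSpace}}$, which produces a finite $\criticalpoint{\vect{r}}$, and weak duality then certifies that these multipliers are dual optimal. This makes the proposition self-contained and in effect pre-proves part of the paper's \refProposition{prop:Dual_problem}, where the identity $\criticalpoint{\vect{\beta}} = \vect{\pi}(\criticalpoint{\vect{r}})$ is recovered from the Lagrangian; the paper's route is shorter but leans on an external theorem. Your closing remark is also accurate and worth keeping: equality of optimal values alone does not force the dual minimum to be attained, and $u$ need not be coercive, since it is invariant under any shift $\vect{r} \mapsto \vect{r} + \vect{r}_0$ with $A \vect{r}_0$ proportional to $\vectones{\cardinality{\stateSpace}}$, so both proofs genuinely need to argue through the primal problem rather than directly on $u$.
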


\begin{proof}
Assume that the optimal solution $\vect{\chi}$ of \refEquation{eqn:Primal_problem} is such that $\vectComponent{\chi}{\svA} = 0$ for all $\svA$ in $\mathcal{I}$. Being the optimal solution, $\vect{\chi}$ must be feasible. Recall that $\vect{\alpha}$ is feasible by assumption. Any distribution on the line that connects $\vect{\alpha}$ and $\vect{\chi}$ is therefore also feasible. When moving from $\vect{\chi}$ towards $\vect{\alpha}$, thus along the direction $\vect{\alpha} - \vect{\chi}$, the change of the objective function in \refEquation{eqn:Primal_problem} is proportional to
\begin{align}
&\transpose{ ( \vect{\alpha} - \vect{\chi} ) } \nabla_{\vect{\beta}} \bigl( -\transpose{ \vect{\beta} } \ln \vect{\beta} + \transpose{ ( \vect{\beta} - \vect{\alpha} )} \vect{b} \bigr) \big|_{\vect{\beta} = \vect{\chi}} \nonumber \\
= &\transpose{ ( \vect{\alpha} - \vect{\chi} ) } \bigl( - \ln \vect{\chi} - \vect{1} + \vect{b} \bigr). \label{eqn:Change_of_objective_function_when_moving_towards_the_optimal_solution}
\end{align}
For $\svA \not\in \mathcal{I}$, $\vectComponent{\chi}{\svA} > 0$ so that $- \ln \vectComponent{\chi}{\svA} - 1 + \vectComponent{b}{\svA}$ is finite. For $\svA \in \mathcal{I}$, $\vectComponent{\chi}{\svA} > 0$ so that \refEquation{eqn:Change_of_objective_function_when_moving_towards_the_optimal_solution}  equals $\infty$ (recall that $\vectComponent{\alpha}{\svA} > 0$ for all $\svA \in \stateSpace$). This means that the objective function increases when moving $\vect{\beta}$ away from $\vect{\chi}$ towards $\vect{\alpha}$. It is therefore not possible that $\vect{\chi}$ is the optimal solution, contradicting our assumption. 

The optimal solution must be such that $\vectComponent{\beta}{\svA} > 0$ for all $\svA \in \stateSpace$. This implies that $-\transpose{ \vect{\beta} } \ln \vect{\beta} + \transpose{ ( \vect{\beta} - \vect{\alpha} )} \vect{b} < \infty$ and using Slater's condition \cite{BoydVandenberghe2004}, we find that the optimal value of the dual problem is attained.
\end{proof}

Strong duality also implies that there exist finite dual variables so that the Lagrangian is maximized. These dual variables turn out to be precisely $\criticalpoint{\vect{r}}$.
So what remains is to show that the dual problem to \refEquation{eqn:Primal_problem} is indeed the minimization of $u(\vect{r})$ over $\vect{r}$ and that the finite dual variables for which the Lagrangian is maximized are indeed $\criticalpoint{\vect{r}}$.

\begin{proposition}
\label{prop:Dual_problem}
The dual problem to \refEquation{eqn:Primal_problem} is 
\begin{align}
\begin{array}{cc}
\textnormal{minimize} & u(\vect{r}), \\
\textnormal{over} & \vect{r} \in \realNumbers^d. \\
\end{array}
\label{eqn:Dual_problem}
\end{align}
\end{proposition}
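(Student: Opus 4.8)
The plan is to carry out the standard Lagrangian duality computation for the concave maximization \refEquation{eqn:Primal_problem} and to show that, after eliminating the multiplier of the normalization constraint, the dual function coincides with $u(\vect{r})$ from \refEquation{eqn:Log_likelihood_function}. I would attach a $d$-dimensional multiplier $\vect{r}$ --- which will turn out to be exactly the parameter vector --- to the vector constraint $\transpose{A}\vect{\beta} = \transpose{A}\vect{\alpha}$, and a scalar multiplier $\mu$ to the scalar constraint $\transpose{\vectones{\cardinality{\stateSpace}}}\vect{\beta} = 1$. The positivity $\vect{\beta} > \vect{0}$ is the natural domain of the entropy term $-\transpose{\vect{\beta}}\ln\vect{\beta}$ and is kept as an implicit constraint, while the upper bounds $\vectComponent{\beta}{\svA} < 1$ are implied by $\transpose{\vectones{\cardinality{\stateSpace}}}\vect{\beta}=1$ together with positivity, so neither requires its own multiplier. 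The Lagrangian is then
\begin{align}
L(\vect{\beta},\vect{r},\mu) = -\transpose{\vect{\beta}}\ln\vect{\beta} + \transpose{(\vect{\beta}-\vect{\alpha})}\vect{b} + \transpose{(A\vect{r})}(\vect{\beta}-\vect{\alpha}) + \mu\bigl(\transpose{\vectones{\cardinality{\stateSpace}}}\vect{\beta}-1\bigr). \nonumber
\end{align}

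Next I would perform the inner maximization over $\vect{\beta}$. Setting $\gradientOperatorWrt{\vect{\beta}} L = \vect{0}$ gives the componentwise stationarity condition $-\ln\vectComponent{\beta}{\svA} - 1 + \vectComponent{b}{\svA} + \vectComponent{(A\vect{r})}{\svA} + \mu = 0$, hence $\vectComponent{\beta}{\svA}^{\star} = \e{\mu-1}\,\e{\vectComponent{(A\vect{r}+\vect{b})}{\svA}}$. Because $-\transpose{\vect{\beta}}\ln\vect{\beta}$ is strictly concave and its gradient diverges as any component tends to $0$ while the term itself dominates the linear parts to $-\infty$ as any component grows, this stationary point is the unique, interior global maximizer, and no boundary terms arise.

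Substituting $\vect{\beta}^{\star}$ back collapses the cross terms: the contributions involving $\transpose{(A\vect{r})}\vect{\beta}^{\star}$ and $\transpose{\vect{\beta}^{\star}}\vect{b}$ cancel against those from the entropy term, and using $\sum_{\svA\in\stateSpace}\vectComponent{\beta}{\svA}^{\star} = \e{\mu-1} Z(\vect{r})$ with $Z(\vect{r}) = \transpose{\vectones{\cardinality{\stateSpace}}}\exp(A\vect{r}+\vect{b})$ I obtain the dual function
\begin{align}
g(\vect{r},\mu) = \e{\mu-1} Z(\vect{r}) - \mu - \transpose{\vect{\alpha}}(A\vect{r}+\vect{b}). \nonumber
\end{align}
The dual problem minimizes $g$ over $(\vect{r},\mu)$; minimizing first over the scalar $\mu$ via $\partial g/\partial\mu = \e{\mu-1}Z(\vect{r}) - 1 = 0$ gives $\mu = 1 - \ln Z(\vect{r})$, and resubstitution yields $\ln Z(\vect{r}) - \transpose{\vect{\alpha}}(A\vect{r}+\vect{b}) = u(\vect{r})$. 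Minimizing what remains over $\vect{r}\in\realNumbers^d$ is exactly \refEquation{eqn:Dual_problem}, proving the claim; as a by-product the multiplier $\vect{r}$ attaining the dual optimum is the sought $\criticalpoint{\vect{r}}$.

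The cross-term cancellation is routine; the two points needing care are the bookkeeping of the multiplier signs and the treatment of the box domain. For the sign, note that replacing $\vect{r}$ by $-\vect{r}$ leaves the unconstrained minimization over $\realNumbers^d$ unchanged, so the sign is chosen solely to reproduce $u(\vect{r})$ verbatim. The genuinely delicate step is justifying that $\vect{\beta}\in(0,1)^{\cardinality{\stateSpace}}$ may be replaced by the implicit positivity domain: one must argue that the inner supremum is attained in the interior so that the upper bounds stay inactive, which follows from strict concavity together with the growth of the entropy term, and which is consistent with \refProposition{prop:Optimal_value_of_the_dual_problem_is_attained}, where the primal optimum is shown to have all components strictly positive.
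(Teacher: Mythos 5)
Your proof is correct, and its computational core is the same as the paper's: the identical Lagrangian (your sign convention reproduces the paper's term $\transpose{(\transpose{A}\vect{\beta}-\transpose{A}\vect{\alpha})}\criticalpoint{\vect{r}}$ exactly), the same stationarity condition giving $\vect{\beta}^{\star}\propto\exp(A\vect{r}+\vect{b})$, and the same algebra collapsing the value to $\ln Z(\vect{r})-\transpose{\vect{\alpha}}(A\vect{r}+\vect{b})=u(\vect{r})$. The structure of the argument differs in two ways, though. First, the paper attaches an explicit multiplier $\criticalpoint{\vect{w}}$ to the positivity constraints and removes it by complementary slackness, which implicitly relies on the interiority of the primal optimum established in \refProposition{prop:Optimal_value_of_the_dual_problem_is_attained}; you instead keep positivity as the implicit domain of the entropy term and argue directly that the inner supremum is attained at an interior stationary point. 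Yours is the cleaner convex-analytic bookkeeping, and it also says explicitly why dropping the upper bounds $\vectComponent{\beta}{\svA}<1$ is harmless, a point the paper passes over silently. Second, the paper invokes strong duality to obtain finite \emph{optimal} multipliers and evaluates the Lagrangian only at the optimum, fixing $\criticalpoint{z}$ through primal feasibility of $\criticalpoint{\vect{\beta}}$, whereas you compute the dual function $g(\vect{r},\mu)$ at \emph{arbitrary} multipliers and then eliminate $\mu$ by partial minimization. Your route is the one that literally establishes what the proposition asserts --- an identification of the dual problem as such, valid without appealing to strong duality inside this proof --- while the paper's route yields, as a by-product, the identity $\criticalpoint{\vect{\beta}}=\vect{\pi}(\criticalpoint{\vect{r}})$, i.e.\ that the optimal multipliers are exactly the parameters achieving the target, which is the fact \refTheorem{thm:Achievable_region} ultimately needs; your closing remark recovers that same by-product.
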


\begin{proof}
By strong duality, we know that there exist finite dual variables $\criticalpoint{\vect{r}} \in \realNumbers^d$, $\criticalpoint{\vect{w}} \in \positiveRealNumbers^{\cardinality{\stateSpace}}$ and $\criticalpoint{z} \in \realNumbers$ such that the Lagrangian
\begin{align}
&L(\vect{\beta}; \criticalpoint{\vect{r}}, \criticalpoint{\vect{w}}, \criticalpoint{z}) 
= -\transpose{ \vect{\beta} } \ln \vect{\beta} + \transpose{ ( \vect{\beta} - \vect{\alpha} ) } \vect{b} \nonumber \\
&+ \transpose{ ( \transpose{A} \vect{\beta} - \transpose{A} \vect{\alpha} ) } \criticalpoint{\vect{r}} + \transpose{\vect{\beta}} \criticalpoint{\vect{w}} + ( \transpose{\vect{1}} \vect{\beta} - 1 ) \criticalpoint{z}
\end{align}
is maximized by the optimal solution $\criticalpoint{\vect{\beta}}$. By complementary slackness, $\criticalpoint{\vect{w}} = \vect{0}$. Because $\criticalpoint{\vect{\beta}}$ maximizes the Lagrangian,
\begin{align}
\nabla_{\vect{\beta}} L(\criticalpoint{\vect{\beta}}; \criticalpoint{\vect{r}}, \criticalpoint{\vect{w}}, \criticalpoint{z})
= &- \ln \criticalpoint{\vect{\beta}} - \vect{1} \\
&+ \vect{b} + A \criticalpoint{\vect{r}} + \criticalpoint{z} \vect{1} = \vect{0}. \nonumber
\end{align}
This equation can be solved for $\criticalpoint{\vect{\beta}}$, resulting in
$\criticalpoint{\vect{\beta}} =  \exp{ \bigl( (\criticalpoint{z}-1) \vect{1} + A \criticalpoint{\vect{r}} + \vect{b} \bigr) }$. The constant $\criticalpoint{z}$ follows from the normalizing condition $\transpose{\vect{1}} \vect{\beta} = 1$, implying that
\begin{align}
\criticalpoint{\vect{\beta}} 
= \frac{1}{\normalizationConstant(\criticalpoint{\vect{r}})} \exp{ \bigl( A \criticalpoint{\vect{r}} + \vect{b} \bigr) },
\end{align}
where $\normalizationConstant(\criticalpoint{\vect{r}}) = \transpose{\vect{1}} \exp{ \bigl( A \criticalpoint{\vect{r}} + \vect{b} \bigr) }$.

Because $\criticalpoint{\vect{\beta}}$ is the optimal solution, we have that
\begin{align}
&\max_{ \vect{\beta} \in (0,1)^{\cardinality{\stateSpace}} } L(\vect{\beta}; \criticalpoint{\vect{r}}, \criticalpoint{\vect{w}}, \criticalpoint{z}) 
= L(\criticalpoint{\vect{\beta}}; \criticalpoint{\vect{r}}, \criticalpoint{\vect{w}}, \criticalpoint{z}) \nonumber \\
&= - \transpose{ {\criticalpoint{\vect{\beta}}} } \ln \criticalpoint{\vect{\beta}} + \transpose{ ( \criticalpoint{\vect{\beta}} - \vect{\alpha} ) } \vect{b} + \transpose{ ( \transpose{A} \criticalpoint{\vect{\beta}} - \transpose{A} \vect{\alpha} ) } \criticalpoint{\vect{r}} \nonumber \\
&= - \transpose{ {\criticalpoint{\vect{\beta}}} } ( \ln \criticalpoint{\vect{\beta}} - A \criticalpoint{\vect{r}} - \vect{b} ) - \transpose{\vect{\alpha}} ( A \criticalpoint{\vect{r}} + \vect{b} ) \nonumber \\
&= \ln \normalizationConstant(\criticalpoint{\vect{r}}) - \transpose{\vect{\alpha}} ( A \criticalpoint{\vect{r}} + \vect{b} )
= u(\criticalpoint{\vect{r}}).
\end{align}
Because $\criticalpoint{\vect{\beta}}$ and $\criticalpoint{\vect{r}}$ solve the primal problem \refEquation{eqn:Primal_problem}, $\criticalpoint{\vect{r}}$ is the solution of $\min_{\vect{r} \in \realNumbers^d} u(\vect{r})$.
\end{proof}

This concludes the proof of \refTheorem{thm:Achievable_region}.
\section{Algorithm}
\label{sec:Algorithm}

Having identified the achievable region in \refSection{sec:Achievable_region}, we now turn to developing an algorithm that finds $\criticalpoint{\vect{r}}$. For this, we modify a (distributed) online algorithm developed and discussed in \cite{SandersBorstLeeuwaarden2012}. 

\subsection{Description}
\label{sec:Algorithm_description}

We apply the algorithm to the objective function $u(\vect{r}) = - \transpose{\vect{\alpha}} \ln \vect{\pi}(\vect{r})$, where $\vect{\alpha} \in (0,1)^{\cardinality{\stateSpace}}$ is such that $\transpose{\vect{1}} \vect{\alpha} = 1$ and $\transpose{A} \vect{\alpha} = \vect{\gamma}$. When $\vect{\gamma}$ is achievable, we know that such a distribution exists by \refTheorem{thm:Achievable_region}. From \refProposition{prop:Properties_of_the_log_likelihood_function}, we see that $u(\vect{r})$ is convex in $\vect{r}$ and that the gradient $\nabla_{\vect{r}} u(\vect{r}) = \transpose{A} \vect{\pi}(\vect{r}) - \vect{\gamma}$. In its unique critical point $\criticalpoint{\vect{r}}$ where $\nabla_{\vect{r}} u(\criticalpoint{\vect{r}}) = \vect{0}$, we indeed have that $\transpose{A} \vect{\pi}(\criticalpoint{\vect{r}}) = \vect{\gamma}$. The algorithm will find this critical point. 

The algorithm is as follows. Let $0 = \totaltime{0} < \totaltime{1} < ...$ and take initial parameters $\vect{r} = \iterand{\vect{R}}{0}$. At time $\totaltime{n+1}$, marking the end of observation period $n+1$, calculate 
\begin{align}
\iterand{ \vectComponent{\hat{\Pi}}{\svA} }{n+1}
= \frac{1}{ \totaltime{n+1} - \totaltime{n} } \int_{ \totaltime{n} }^{ \totaltime{n+1} } \indicator{ \iterand{Z}{n}(t) = \svA } dt
\end{align}
for every state $\svA \in \stateSpace$. Here, $\process{ \iterand{Z}{n}(t) }{ \totaltime{n} \leq t \leq \totaltime{n+1} }$ is a time-homogeneous Markov process, which starts in $\iterand{Z}{n-1}( \totaltime{n} )$ and evolves according to the generator of $\process{X(t)}{t \geq 0}$ that corresponds to parameters $\iterand{\vect{R}}{n}$. Then update all parameters by setting 
\begin{align}
\iterand{\vect{R}}{n+1} 
= \truncate{ \iterand{\vect{R}}{n} - \itstep{n+1} ( \transpose{A} \iterand{ \hat{\vect{\Pi}}}{n+1} - \vect{\gamma} ) }{ \mathcal{R} }. \label{eqn:Online_algorithm_for_loglikelihood_for_general_R}
\end{align}
Here, $\itstep{n+1}$ denotes the step size and the truncation $\truncate{ \vect{r} }{ \mathcal{R} }$ is component-wise defined as 
\begin{align}
\vectComponent{ \truncate{ \vect{r} }{ \mathcal{R} } }{i} 
= \max \{ \vectComponent{\mathcal{R}}{i}^{\min}, \min \{ \vectComponent{\mathcal{R}}{i}^{\max}, \vectComponent{r}{i} \} \}
\end{align}
for $\mathcal{R} = [ \vectComponent{ \mathcal{R}^{\min} }{1}, \vectComponent{ \mathcal{R}^{\max} }{1} ] \times ... \times [ \vectComponent{ \mathcal{R}^{\min} }{d}, \vectComponent{ \mathcal{R}^{\max} }{d} ]$, which denotes the set of available parameter values.

Conditions on $\itstep{n}$ and $\itfreq{n} = 1 / ( \totaltime{n} - \totaltime{n-1} )$ that ensure convergence of \refEquation{eqn:Online_algorithm_for_loglikelihood_for_general_R} can be found in \cite{SandersBorstLeeuwaarden2012}, but only when $0 < \vectComponent{ \mathcal{R}^{\max} }{d} - \vectComponent{ \mathcal{R}^{min} }{d} < \infty$ for $i = 1, ..., d$. Determining the achievable region for this set of available parameters requires solving a potentially NP-hard inversion problem \cite{LMK94}. Instead, we would like to use \refTheorem{thm:Achievable_region}, but \refTheorem{thm:Achievable_region} only holds under the assumption that $[ \vectComponent{ \mathcal{R}^{\min} }{i}, \vectComponent{ \mathcal{R}^{\max} }{i} ] = \realNumbers$ for $i = 1, ..., d$. 

We therefore need to establish new conditions on $\itstep{n}$ and $\itfreq{n}$ that ensure convergence for the case that $\mathcal{R} = \realNumbers^d$. Such conditions can be found by modifying the proof in \cite[\S{IV}]{SandersBorstLeeuwaarden2012}, which is the topic of the remainder of this section.

\subsection{Conditions for convergence}
\label{sec:Algorithm__Conditions_for_convergence}

When $\mathcal{R} = \realNumbers^d$, we write
\begin{align}
\iterand{\vect{R}}{n+1} 
= \iterand{\vect{R}}{n} - \itstep{n+1} ( \transpose{A} \iterand{ \hat{\vect{\Pi}}}{n+1} - \vect{\gamma} ). \label{eqn:Online_algorithm_for_loglikelihood_for_Rd}
\end{align}
For the algorithm in \refEquation{eqn:Online_algorithm_for_loglikelihood_for_Rd}, we will prove following result.

\begin{theorem}
\label{thm:Convergence}
The sequence $\iterand{\vect{R}}{n}$ generated by the online algorithm \refEquation{eqn:Online_algorithm_for_loglikelihood_for_Rd} converges to the optimal solution $\criticalpoint{\vect{r}}$ with probability one, if $\itstep{n}$, $\iterror{n}$ and $\itfreq{n}$ are such that
\begin{itemize}
\item[\textnormal{(i)}] $\sum_{n=1}^{\infty} \itstep{n} = \infty$, $\sum_{n=1}^{\infty} (\itstep{n})^2 < \infty$,
\item[\textnormal{(ii)}] $\sum_{n=1}^{\infty} \itstep{n} \bigl( \sum_{m=1}^{n-1} \itstep{m} \bigr) \bigl( \iterror{n} + \exp ( c_{2} \sum_{m=1}^n \itstep{m} - c_3 \frac{ (\iterror{n})^2 }{ \itfreq{n} } \exp{ ( - c_4 \sum_{m=1}^n \itstep{m} ) } ) \bigr) < \infty$ for all $c_2$, $c_3$, $c_4 \in \strictlyPositiveRealNumbers$.
\end{itemize}
\end{theorem}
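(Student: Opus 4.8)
The plan is to read \refEquation{eqn:Online_algorithm_for_loglikelihood_for_Rd} as a stochastic gradient recursion for the convex function $u$ and to establish almost-sure convergence through a Lyapunov argument built around $V_n := \| \iterand{\vect{R}}{n} - \criticalpoint{\vect{r}} \|^2$. Let $\mathcal{F}_n$ denote the $\sigma$-algebra generated by the history of the algorithm up to time $\totaltime{n}$, and decompose the observed increment as $\transpose{A} \iterand{ \hat{\vect{\Pi}}}{n+1} - \vect{\gamma} = \vect{g}(\iterand{\vect{R}}{n}) + \iterand{\vect{e}}{n+1}$, where $\vect{g}(\iterand{\vect{R}}{n}) = \transpose{A} \vect{\pi}(\iterand{\vect{R}}{n}) - \vect{\gamma}$ is the exact gradient from \refEquation{eqn:Derivative_of_g_as_vector_difference} and $\iterand{\vect{e}}{n+1} = \transpose{A} ( \iterand{ \hat{\vect{\Pi}}}{n+1} - \vect{\pi}(\iterand{\vect{R}}{n}) )$ is the estimation error, which is nonzero because $\iterand{ \hat{\vect{\Pi}}}{n+1}$ is only a finite-horizon time average of a chain that was not started in equilibrium. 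Expanding $V_{n+1}$ and taking $\expectation{ \cdot \mid \mathcal{F}_n }$ gives
\begin{align}
\expectation{ V_{n+1} \mid \mathcal{F}_n }
&= V_n - 2 \itstep{n+1} \innerproduct{ \iterand{\vect{R}}{n} - \criticalpoint{\vect{r}} }{ \vect{g}(\iterand{\vect{R}}{n}) } \nonumber \\
&\quad - 2 \itstep{n+1} \innerproduct{ \iterand{\vect{R}}{n} - \criticalpoint{\vect{r}} }{ \expectation{ \iterand{\vect{e}}{n+1} \mid \mathcal{F}_n } } \nonumber \\
&\quad + (\itstep{n+1})^2 \expectation{ \bigl\| \transpose{A} \iterand{ \hat{\vect{\Pi}}}{n+1} - \vect{\gamma} \bigr\|^2 \mid \mathcal{F}_n }. \nonumber
\end{align}

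Two of these three terms are routine. By convexity of $u$ (\refProposition{prop:Properties_of_the_log_likelihood_function}) and $\vect{g}(\criticalpoint{\vect{r}}) = \vect{0}$, monotonicity of the gradient yields $\innerproduct{ \iterand{\vect{R}}{n} - \criticalpoint{\vect{r}} }{ \vect{g}(\iterand{\vect{R}}{n}) } \geq 0$, with strict positivity whenever $\iterand{\vect{R}}{n} \neq \criticalpoint{\vect{r}}$ because $\criticalpoint{\vect{r}}$ is the unique critical point; this is the useful descent term and will play the role of the nonnegative $\zeta_n$ in the Robbins--Siegmund almost-supermartingale convergence theorem. The quadratic term is harmless: since $\iterand{ \hat{\vect{\Pi}}}{n+1}$ is a probability vector, $\transpose{A} \iterand{ \hat{\vect{\Pi}}}{n+1} - \vect{\gamma}$ lies in a fixed bounded set, so the term is at most $c (\itstep{n+1})^2$ and is summable by condition~(i). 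The same boundedness forces the iterates to grow at most linearly, $\| \iterand{\vect{R}}{n} - \criticalpoint{\vect{r}} \| \leq c ( 1 + \sum_{m=1}^{n} \itstep{m} )$, which is exactly the source of the $\sum_{m=1}^{n-1} \itstep{m}$ prefactor in condition~(ii).

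The crux, and the reason the conditions must be strengthened relative to \cite{SandersBorstLeeuwaarden2012}, is the middle cross term, which carries the bias $\expectation{ \iterand{\vect{e}}{n+1} \mid \mathcal{F}_n }$ of the estimator. I would bound it by $\| \iterand{\vect{R}}{n} - \criticalpoint{\vect{r}} \| \cdot \expectation{ \| \iterand{\vect{e}}{n+1} \| \mid \mathcal{F}_n }$ and control the conditional error moment through the ergodicity of the chain. Splitting on the event that the occupation measure lies within $\iterror{n}$ of $\vect{\pi}(\iterand{\vect{R}}{n})$ and applying a concentration inequality for Markov-chain time averages, this moment is at most $\iterror{n}$ plus the probability of the complementary rare event. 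The spectral gap controlling that concentration degrades as the parameters grow; bounding it below by $\exp( - c_4 \sum_{m=1}^n \itstep{m} )$ (using the linear growth bound) and the associated prefactor above by $\exp( c_2 \sum_{m=1}^n \itstep{m} )$ produces precisely the factor $\iterror{n} + \exp\bigl( c_2 \sum_{m=1}^n \itstep{m} - c_3 \frac{ (\iterror{n})^2 }{ \itfreq{n} } \exp( - c_4 \sum_{m=1}^n \itstep{m} ) \bigr)$ of condition~(ii). Multiplying by $\itstep{n+1}$ and the growth bound then makes the cross term summable exactly when condition~(ii) holds, so it supplies the summable perturbation $\eta_n$.

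Collecting the estimates yields $\expectation{ V_{n+1} \mid \mathcal{F}_n } \leq V_n - \zeta_n + \eta_n$ with $\zeta_n \geq 0$ and $\sum_n \eta_n < \infty$, so the Robbins--Siegmund theorem shows that $V_n$ converges almost surely to a finite limit and that $\sum_n \itstep{n+1} \innerproduct{ \iterand{\vect{R}}{n} - \criticalpoint{\vect{r}} }{ \vect{g}(\iterand{\vect{R}}{n}) } < \infty$ almost surely. To conclude I would argue by contradiction: if the limit of $V_n$ were positive, then $\iterand{\vect{R}}{n}$ would eventually stay in a compact annulus around $\criticalpoint{\vect{r}}$ on which $\innerproduct{ \iterand{\vect{R}}{n} - \criticalpoint{\vect{r}} }{ \vect{g}(\iterand{\vect{R}}{n}) } \geq \delta > 0$ by continuity and the strict positivity above, whence $\sum_n \itstep{n+1} \innerproduct{ \iterand{\vect{R}}{n} - \criticalpoint{\vect{r}} }{ \vect{g}(\iterand{\vect{R}}{n}) } \geq \delta \sum_n \itstep{n+1} = \infty$ by condition~(i), a contradiction. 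Hence $V_n \to 0$, i.e.\ $\iterand{\vect{R}}{n} \to \criticalpoint{\vect{r}}$ almost surely. The main obstacle throughout is the bias cross term: with unbounded parameters both the mixing time and $\| \iterand{\vect{R}}{n} - \criticalpoint{\vect{r}} \|$ are unbounded, and the delicate point is to quantify the mixing degradation sharply enough, via a concentration inequality with an explicitly $\iterand{\vect{R}}{n}$-dependent gap, that the resulting perturbation remains summable under condition~(ii).
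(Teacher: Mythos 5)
Your proposal is correct in substance and, on the points that are genuinely new in this paper, it coincides with the paper's own proof: the two indispensable ingredients are the deterministic growth bound $|\vectComponent{\iterand{R}{n}}{i}| \leq |\vectComponent{\iterand{R}{0}}{i}| + \gradientConstant \sum_{m=1}^{n} \itstep{m}$ (the paper's \refLemma{lem:Upper_bound_for_components_of_R_n}) and a concentration bound for the occupation measure whose constants degrade exponentially in $\sum_{m=1}^{n}\itstep{m}$ (the paper's \refLemma{lem:Probabilistic_bound_on_measuring_the_equilibrium_distribution}), and you combine them with condition (ii) exactly as the paper does. Where you differ is the surrounding machinery: the paper does not redo the stochastic-approximation argument but verifies the hypotheses of \refLemma{lem:Conditions_for_convergence}, quoted from \cite{SandersBorstLeeuwaarden2012}, which splits the error into the bias $\iterand{\vect{B}}{n}$ and the martingale noise $\iterand{\vect{E}}{n}$ and imposes Cauchy-type tail conditions on both; you instead run a self-contained Robbins--Siegmund argument on $V_n = \|\iterand{\vect{R}}{n}-\criticalpoint{\vect{r}}\|^2$. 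Your route is arguably cleaner on one point: in the conditional expansion of $V_{n+1}$ the zero-mean noise drops out of the cross term and is absorbed into the quadratic term, which is summable by condition (i) alone, so no separate martingale tail estimate is needed; the bias is then the only delicate term, just as in the paper.

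Two caveats. First, the heart of the matter, the inequality $\probability{ |\iterand{\vectComponent{\hat{\Pi}}{\svA}}{n} - \vectComponent{\pi}{\svA}(\iterand{\vect{R}}{n-1})| \geq \iterror{n} } \leq c_1 \exp\bigl( c_2 \sum_{m=1}^{n}\itstep{m} - c_3 \tfrac{(\iterror{n})^2}{\itfreq{n}} \exp\bigl( -c_4 \sum_{m=1}^{n}\itstep{m} \bigr) \bigr)$, is asserted in your sketch but not proved; in the paper this is precisely \refLemma{lem:Probabilistic_bound_on_measuring_the_equilibrium_distribution}, whose proof in \refAppendixSection{sec:Proof_of__Probabilistic_bound_on_measuring_the_equilibrium_distribution} bounds $\iterand{\vectComponent{\pi}{\min}}{n}$ from below and the path constant $\kappa$ from above along the iterates, and this is where most of the remaining technical work resides. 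A complete write-up would have to supply that derivation, and also note that the bound holds conditionally on the past uniformly in the starting state, since you apply it to $\expectation{ \|\iterand{\vect{e}}{n+1}\| \mid \mathcal{F}_n }$. Second, your strict-positivity step $\innerproduct{\iterand{\vect{R}}{n}-\criticalpoint{\vect{r}}}{\vect{g}(\iterand{\vect{R}}{n})} > 0$ for $\iterand{\vect{R}}{n}\neq\criticalpoint{\vect{r}}$ relies on uniqueness of the critical point; the paper makes the same assertion without proof, so you are consistent with it, but this should be flagged as an assumption rather than a consequence of convexity alone.
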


In \refProposition{prop:Sequences_that_meet_the_conditions}, we give two sets of sequences $\itstep{n}$, $\iterror{n}$ and $\itfreq{n}$, labelled \textnormal{(a)} and \textnormal{(b)}, such that conditions \textnormal{(i)} and \textnormal{(ii)} in \refTheorem{thm:Convergence} hold. The proofs that these sequences indeed satisfy the conditions are deferred to \refAppendixSection{sec:Proof_of__Sequences_that_meet_the_conditions}. The choices made for the step sizes and observation periods in \textnormal{(a)} are based on similar ideas as in \cite{JiangWalrand2009,JiangWalrand2010}. Note however that conditions \textnormal{(i)} and \textnormal{(ii)} differ from the conditions in \cite{JiangWalrand2009,JiangWalrand2010}, and the verification that \textnormal{(i)} and \textnormal{(ii)} hold is therefore different.
\begin{proposition}
\label{prop:Sequences_that_meet_the_conditions}
Conditions \textnormal{(i)} and \textnormal{(ii)} in \refTheorem{thm:Convergence} hold for
\begin{itemize}
\item[\textnormal{(a)}] $\itstep{n} = ( n \ln(n+1) )^{-1}$, $\iterror{n} = n^{-\alpha/2}$ and $\itfreq{n} = n^{-\delta} $ where $\alpha > 0$ and $\delta > 1 + \alpha$, and
\item[\textnormal{(b)}] $\itstep{n} = n^{-1}$, $\iterror{n} = ( n^{\alpha/2} \sum_{m=1}^{n-1} m^{-1} )^{-1}$ and $\itfreq{n} = ( \ln n + 1 )^{-2} n^{- \delta }$ where $\alpha > 0$ and $\delta \geq 1 + \alpha + c_4 = 1 + \alpha + \gradientConstant ( 1 + 2 \max_{\svB \in \stateSpace} \pnorm{ \matrixRow{A}{\svB} }{1} )$.
\end{itemize}
\end{proposition}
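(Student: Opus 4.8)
The plan is to verify conditions \textnormal{(i)} and \textnormal{(ii)} separately for each of the two sequence families, the workhorse throughout being sharp asymptotics for the partial sums $S_n := \sum_{m=1}^n \itstep{m}$. For \textnormal{(a)} an integral comparison yields $S_n = \ln\ln n + \bigO{1}$, whereas for \textnormal{(b)} the harmonic asymptotics give $S_n = \ln n + \bigO{1}$; these doubly logarithmic versus logarithmic growth rates are what ultimately separate the two cases. Once these are in hand, condition \textnormal{(i)} is immediate: in \textnormal{(a)}, $\sum_n \itstep{n}$ diverges since its partial sums $S_n\to\infty$, and $\sum_n (\itstep{n})^2 \le \sum_n n^{-2} < \infty$; in \textnormal{(b)}, $\sum_n \itstep{n}$ is the harmonic series and again $\sum_n (\itstep{n})^2 = \sum_n n^{-2} < \infty$.

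For condition \textnormal{(ii)} I would split the summand across the ``$+$'' and bound the two resulting series independently. The first, $\sum_n \itstep{n} S_{n-1} \iterror{n}$, is treated by direct substitution. In case \textnormal{(a)} its general term is of order $(\ln\ln n)/(n^{1+\alpha/2}\ln(n+1))$, which is summable since $\alpha>0$ makes the polynomial factor $n^{1+\alpha/2}$ dominate every logarithmic factor. In case \textnormal{(b)} the definition $\iterror{n} = (n^{\alpha/2}\sum_{m=1}^{n-1}m^{-1})^{-1}$ is engineered precisely so that $\sum_{m=1}^{n-1}m^{-1}=S_{n-1}$ cancels, collapsing the general term to the clean $n^{-1-\alpha/2}$ and hence to a convergent $p$-series; this cancellation explains the otherwise peculiar shape of $\iterror{n}$ in \textnormal{(b)}.

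The crux is the second series, whose summand carries the factor $\exp\bigl(c_2 S_n - c_3 (\iterror{n})^2 \itfreq{n}^{-1}\exp(-c_4 S_n)\bigr)$. I would show that its exponent tends to $-\infty$ fast enough that this factor is super-exponentially small, so that the at-most-poly-logarithmic prefactor $\itstep{n}S_{n-1}$ cannot spoil summability. The key observation is that $(\iterror{n})^2/\itfreq{n}$ grows \emph{polynomially}: it equals $n^{\delta-\alpha}$ in \textnormal{(a)}, and $n^{\delta-\alpha}(\ln n+1)^2/S_{n-1}^2 = \bigO{n^{\delta-\alpha}}$ in \textnormal{(b)} since the logarithmic ratio is bounded, whereas the companion factor $\exp(-c_4 S_n)$ decays only as $(\ln n)^{-c_4}$ in \textnormal{(a)} and as $n^{-c_4}$ in \textnormal{(b)}. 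In \textnormal{(a)} the polynomial $n^{\delta-\alpha}$ with $\delta-\alpha>1$ swamps both the poly-logarithmic decay $(\ln n)^{-c_4}$ and the poly-logarithmic growth $\exp(c_2 S_n)=(\ln n)^{c_2}$, so the exponent behaves like $-c_3 n^{\delta-\alpha}(\ln n)^{-c_4}\to-\infty$; in \textnormal{(b)} the exponent behaves like $c_2\ln n - c_3' n^{\delta-\alpha-c_4}$, and the hypothesis $\delta\ge 1+\alpha+c_4$ forces $\delta-\alpha-c_4\ge 1$, so a genuine positive power of $n$ again dominates.

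The main obstacle I anticipate is controlling the uniformity in the constants, and in particular the differing roles of $c_4$. The doubly logarithmic growth of $S_n$ in \textnormal{(a)} is a structural advantage of the step size $\itstep{n}=(n\ln(n+1))^{-1}$: because $\exp(-c_4 S_n)$ then decays only poly-logarithmically, it is beaten by the polynomial $n^{\delta-\alpha}$ for \emph{every} $c_4$, so \textnormal{(a)} meets condition \textnormal{(ii)} robustly. In \textnormal{(b)}, where $S_n\sim\ln n$ makes $\exp(-c_4 S_n)\sim n^{-c_4}$ genuinely compete with the polynomial, one must tie $\delta$ to $c_4$ through $\delta\ge 1+\alpha+c_4=1+\alpha+\gradientConstant(1+2\max_{\svB\in\stateSpace}\pnorm{\matrixRow{A}{\svB}}{1})$, which is exactly the value of $c_4$ furnished by the convergence analysis of \refTheorem{thm:Convergence}. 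Keeping careful track of this dependence, and of the $\bigO{1}$ remainders in the partial-sum asymptotics (so that the logarithmic ratios really are bounded away from $0$ and $\infty$), is where the genuine care lies; the summability conclusions themselves then follow from routine comparison tests.
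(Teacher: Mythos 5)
Your proposal is correct and takes essentially the same route as the paper's own proof: the same verification of condition (i), the same splitting of condition (ii) into the $\iterror{n}$-series and the exponential series, the same partial-sum asymptotics ($\sum_m \itstep{m} = \ln\ln n + \bigO{1}$ in (a), $\ln n + \bigO{1}$ in (b)), the same cancellation making (b)'s first series a $p$-series, and the same use of $\delta > 1+\alpha$ (resp. $\delta \geq 1+\alpha+c_4$) to turn the negative part of the exponent into a dominating positive power of $n$ (the paper merely finishes with an explicit ratio test where you invoke a comparison test). One cosmetic caution: in (b) what is actually needed is the lower bound $(\iterror{n})^2/\itfreq{n} \geq n^{\delta-\alpha}$ (via $(\ln n + 1)/\sum_{m=1}^{n-1}m^{-1} \geq 1$), not the upper bound suggested by your "$=\bigO{n^{\delta-\alpha}}$", but your closing remark about ratios bounded away from $0$ and $\infty$ shows you have the right two-sided estimate in mind.
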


\subsection{Convergence proof}
\label{sec:Algorithm__Convergence_proof}

We start by defining the error bias $\iterand{\vect{B}}{n} = \expectation{ \iterand{ \vect{\hat{G}} }{n} | \iterand{\mathcal{F}}{n-1} } - \iterand{ \vect{G} }{n}$ and zero-mean noise $\iterand{\vect{E}}{n} = \iterand{ \vect{\hat{G}} }{n} - \expectation{ \iterand{ \vect{\hat{G}} }{n} | \iterand{\mathcal{F}}{n-1} }$, respectively. Here, $\iterand{\mathcal{F}}{n-1}$ denotes the $\sigma$-field generated by the random vectors $\iterand{\vect{Z}}{0}, \iterand{\vect{Z}}{1}, ..., \iterand{\vect{Z}}{n-1}$, where $\iterand{\vect{Z}}{0} = \vectInLine{ \iterand{\vect{R}}{0}, X(0) }$ and $\iterand{\vect{Z}}{n} = \vectInLine{ \iterand{\vect{\hat{G}}}{n}, \iterand{\vect{R}}{n}, X({\totaltime{n}}) }$ for $n \geq 1$. We will proceed to use \refLemma{lem:Conditions_for_convergence}, proven in \cite[\S{4.1}]{SandersBorstLeeuwaarden2012}

\begin{lemma}
\label{lem:Conditions_for_convergence}
The sequence $\iterand{\vect{R}}{n}$ generated by either online algorithm \refEquation{eqn:Online_algorithm_for_loglikelihood_for_general_R} or \refEquation{eqn:Online_algorithm_for_loglikelihood_for_Rd} converges to the optimal solution $\criticalpoint{\vect{r}}$ with probability one, if the summation $\sum_{n=1}^{\infty} \itstep{n} \bigl| \expectation{ \transpose{ \iterand{\vect{B}}{n} } ( \iterand{\vect{R}}{n-1} - \criticalpoint{\vect{r}} ) | \iterand{\mathcal{F}}{n-1} } \bigr|$ is finite with probability one and if also, for any $\varepsilon > 0$, there exists $m_0 \in \naturalNumbers$ so that for any $n \geq m \geq m_0$,
\begin{itemize}
\item[\textnormal{(i)}] $\sum_{j=m}^{n} \itstep{j} \transpose{ \iterand{\vect{B}}{j} } ( \criticalpoint{\vect{r}} - \iterand{\vect{R}}{j-1} ) \leq \varepsilon$ and
\item[\textnormal{(ii)}] $\sum_{j=m}^{n} \itstep{j} \transpose{ \iterand{\vect{E}}{j} } ( \criticalpoint{\vect{r}} - \iterand{\vect{R}}{j-1} ) \leq \varepsilon$
\end{itemize}
with probability one.
\end{lemma}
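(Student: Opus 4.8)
The plan is to track the squared distance to the optimum, $\iterand{V}{n} = \tfrac{1}{2}\pnorm{\iterand{\vect{R}}{n} - \criticalpoint{\vect{r}}}{2}^2$, and to prove $\iterand{V}{n} \to 0$ with probability one, which is equivalent to the asserted convergence $\iterand{\vect{R}}{n} \to \criticalpoint{\vect{r}}$. First I would substitute the update \refEquation{eqn:Online_algorithm_for_loglikelihood_for_Rd} and expand the square, writing the estimate in the decomposition $\iterand{\vect{\hat{G}}}{n} = \iterand{\vect{G}}{n} + \iterand{\vect{B}}{n} + \iterand{\vect{E}}{n}$ forced by the definitions of the bias $\iterand{\vect{B}}{n}$ and the zero-mean noise $\iterand{\vect{E}}{n}$, where $\iterand{\vect{\hat{G}}}{n} = \transpose{A}\iterand{\hat{\vect{\Pi}}}{n} - \vect{\gamma}$. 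This yields
\begin{align}
\iterand{V}{n}
&= \iterand{V}{n-1} - \itstep{n}\transpose{\iterand{\vect{G}}{n}}(\iterand{\vect{R}}{n-1} - \criticalpoint{\vect{r}}) \nonumber \\
&\quad + \itstep{n}\transpose{\iterand{\vect{B}}{n}}(\criticalpoint{\vect{r}} - \iterand{\vect{R}}{n-1}) + \itstep{n}\transpose{\iterand{\vect{E}}{n}}(\criticalpoint{\vect{r}} - \iterand{\vect{R}}{n-1}) \nonumber \\
&\quad + \tfrac{1}{2}(\itstep{n})^2 \pnorm{\iterand{\vect{\hat{G}}}{n}}{2}^2 .
\end{align}
The truncated update \refEquation{eqn:Online_algorithm_for_loglikelihood_for_general_R} is covered by the same identity with an inequality, since projecting onto the box $\mathcal{R}$ is non-expansive and can only decrease $\iterand{V}{n}$ provided $\criticalpoint{\vect{r}} \in \mathcal{R}$.

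The structural key is that the deterministic-gradient term is a genuine descent term. Since $\iterand{\vect{G}}{n} = \transpose{A}\vect{\pi}(\iterand{\vect{R}}{n-1}) - \vect{\gamma}$ is the gradient of the convex function $u$ from \refProposition{prop:Properties_of_the_log_likelihood_function} and $\nabla u(\criticalpoint{\vect{r}}) = \vect{0}$, monotonicity of the gradient of a convex function gives $\transpose{\iterand{\vect{G}}{n}}(\iterand{\vect{R}}{n-1} - \criticalpoint{\vect{r}}) \geq 0$, so this term can only pull $\iterand{V}{n}$ down. The two remaining cross-terms are exactly the bias and noise increments that, once summed over $j$, appear in hypotheses \textnormal{(i)} and \textnormal{(ii)}. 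The final term is quadratic in the step size, and because $\iterand{\hat{\vect{\Pi}}}{n}$ is a probability vector and $A$ a fixed finite matrix, $\pnorm{\iterand{\vect{\hat{G}}}{n}}{2} \leq \martingaleConstant$ for a deterministic constant, so that term is summable whenever $\sum_n (\itstep{n})^2 < \infty$.

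Next I would telescope the identity from an index $m$ to $n$ and take conditional expectations to isolate the martingale structure. The first hypothesis, summability of $\sum_n \itstep{n}\bigl|\expectation{\transpose{\iterand{\vect{B}}{n}}(\iterand{\vect{R}}{n-1} - \criticalpoint{\vect{r}})|\iterand{\mathcal{F}}{n-1}}\bigr|$, controls the predictable part of the bias, so that $\iterand{V}{n}$ is an almost-supermartingale of Robbins--Siegmund type once the conditional-bias and quadratic contributions are absorbed. The Robbins--Siegmund theorem then delivers two facts simultaneously: $\iterand{V}{n}$ converges almost surely to a finite limit, and the nonnegative descent series $\sum_n \itstep{n}\transpose{\iterand{\vect{G}}{n}}(\iterand{\vect{R}}{n-1} - \criticalpoint{\vect{r}})$ is finite almost surely. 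Hypotheses \textnormal{(i)} and \textnormal{(ii)} supply the complementary pathwise control: by bounding the partial sums between $m$ and $n$ by an arbitrary $\varepsilon$ for all $n \geq m \geq m_0$, they force the bias and noise series to be Cauchy, hence convergent, along almost every sample path, so these sign-indefinite fluctuations cannot prevent $\iterand{V}{n}$ from settling.

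Finally, a finite descent series together with $\sum_n \itstep{n} = \infty$ forces $\liminf_n \transpose{\iterand{\vect{G}}{n}}(\iterand{\vect{R}}{n-1} - \criticalpoint{\vect{r}}) = 0$; since $u$ has the unique minimizer $\criticalpoint{\vect{r}}$ with strictly monotone gradient there (its Hessian is the covariance $\transpose{A}(\mathrm{diag}(\vect{\pi}) - \vect{\pi}\transpose{\vect{\pi}})A$), this vanishing inner product pins a subsequence of $\iterand{\vect{R}}{n-1}$ to $\criticalpoint{\vect{r}}$, i.e. $\iterand{V}{n} \to 0$ along a subsequence. Combining a zero subsequential limit with the almost-sure convergence of the full sequence $\iterand{V}{n}$ gives $\iterand{V}{n} \to 0$, which is the claim. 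I expect the principal obstacle to be the almost-sure, pathwise bookkeeping rather than any single estimate: one must reconcile the supermartingale argument (which yields convergence of $\iterand{V}{n}$ out of the conditional-expectation hypothesis) with the sample-path Cauchy control of \textnormal{(i)} and \textnormal{(ii)}, and verify that the strict-convexity step upgrades a vanishing gradient inner product into genuine convergence of $\iterand{\vect{R}}{n}$, closing the subsequence-to-sequence gap.
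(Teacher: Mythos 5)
Your proposal is sound, but it takes a genuinely different route from the paper's. The paper contains no in-text proof of \refLemma{lem:Conditions_for_convergence} at all: it imports the lemma from \cite[\S{4.1}]{SandersBorstLeeuwaarden2012}, whose argument (in the Jiang--Walrand tradition) is a pathwise perturbed-gradient-descent proof in which hypotheses \textnormal{(i)} and \textnormal{(ii)} are the engine --- they cap the possible \emph{increase} of $\pnorm{\iterand{\vect{R}}{n}-\criticalpoint{\vect{r}}}{2}^2$ between any two epochs $m_0 \leq m \leq n$, so that the descent supplied by convexity eventually traps the iterates, without invoking martingale convergence theorems. You instead run Robbins--Siegmund on $\iterand{V}{n}$, which is a cleaner and more classical stochastic-approximation argument; it even reveals that \textnormal{(i)} and \textnormal{(ii)} are nearly redundant in your architecture, since $\iterand{\vect{B}}{n}$ is $\iterand{\mathcal{F}}{n-1}$-measurable, so the lemma's first hypothesis already equals $\sum_{n} \itstep{n} \bigl| \transpose{ \iterand{\vect{B}}{n} } ( \iterand{\vect{R}}{n-1} - \criticalpoint{\vect{r}} ) \bigr| < \infty$ and is the only perturbation the almost-supermartingale needs, the zero-mean noise being absorbed by supermartingale convergence. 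Three caveats. First, you silently import $\sum_n \itstep{n} = \infty$ and $\sum_n (\itstep{n})^2 < \infty$, which are not hypotheses of the lemma as stated (they appear only as condition \textnormal{(i)} of \refTheorem{thm:Convergence}); they are standing assumptions in the cited source and genuinely needed --- without $\sum_n \itstep{n} = \infty$ the conclusion is false --- so you should flag them explicitly. Second, your reading of \textnormal{(i)}/\textnormal{(ii)} as two-sided Cauchy control is slightly off: they are one-sided bounds (no absolute value), precisely the sign that prevents $\iterand{V}{n}$ from rising; this is inessential to your proof but matters for the paper's. Third, your closing step needs only plain convexity plus the uniqueness of $\criticalpoint{\vect{r}}$ asserted in \refSection{sec:Algorithm_description} --- positive definiteness of $\transpose{A}(\mathrm{diag}(\vect{\pi})-\vect{\pi}\transpose{\vect{\pi}})A$ requires a rank condition on $A$ the paper never states --- with boundedness of the subsequence coming for free from the almost-sure convergence of $\iterand{V}{n}$, which closes your subsequence-to-sequence gap exactly as you anticipate.
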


Proving \refTheorem{thm:Convergence} thus boils down to verifying the conditions in \refLemma{lem:Conditions_for_convergence} for \refEquation{eqn:Online_algorithm_for_loglikelihood_for_Rd}. This is however more complicated than for \refEquation{eqn:Online_algorithm_for_loglikelihood_for_general_R}, which has been done in \cite[\S{IV-B}]{SandersBorstLeeuwaarden2012}. This is because it is possible for a component of $\iterand{\vect{R}}{n+1}$ to run off to $\pm \infty$. Fortunately, because $\pnorm{ \gradientOperatorWrt{\vect{r}} u(\vect{r}) }{2} \leq \gradientConstant = \cardinality{\stateSpace} d \max_{\svA,i} | \matrixElement{A}{\svA}{i} |$ \cite[\S{III-B}]{SandersBorstLeeuwaarden2012}, the rate at which the algorithm can be bounded.
\begin{lemma}
\label{lem:Upper_bound_for_components_of_R_n}
For $n \in \naturalNumbers$ and $i = 1, ..., d$,
$| \vectComponent{ \iterand{R}{n} }{i} | \leq | \vectComponent{ \iterand{R}{0} }{i} | + \gradientConstant \sum_{m=1}^{n} \itstep{m}$.
\end{lemma}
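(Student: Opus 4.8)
The plan is to prove the bound by induction on $n$, using the recursion in \refEquation{eqn:Online_algorithm_for_loglikelihood_for_Rd}. Writing the update component-wise gives $\vectComponent{\iterand{R}{n+1}}{i} = \vectComponent{\iterand{R}{n}}{i} - \itstep{n+1} \vectComponent{ ( \transpose{A} \iterand{\hat{\vect{\Pi}}}{n+1} - \vect{\gamma} ) }{i}$, so the triangle inequality yields $| \vectComponent{\iterand{R}{n+1}}{i} | \leq | \vectComponent{\iterand{R}{n}}{i} | + \itstep{n+1} | \vectComponent{ ( \transpose{A} \iterand{\hat{\vect{\Pi}}}{n+1} - \vect{\gamma} ) }{i} |$. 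The entire lemma then reduces to establishing a uniform bound of $\gradientConstant$ on each component of the empirical increment direction $\transpose{A} \iterand{\hat{\vect{\Pi}}}{n+1} - \vect{\gamma}$, since adding $\itstep{n+1} \gradientConstant$ to the inductive hypothesis extends the partial sum $\sum_{m=1}^n \itstep{m}$ to $\sum_{m=1}^{n+1} \itstep{m}$.

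The key observation is that $\iterand{\hat{\vect{\Pi}}}{n+1}$ is a genuine probability distribution on $\stateSpace$: its components are non-negative and, since the indicators $\indicator{ \iterand{Z}{n}(t) = \svA }$ partition $\stateSpace$, they integrate to one over the observation period after normalization. First I would write $\vectComponent{ ( \transpose{A} \iterand{\hat{\vect{\Pi}}}{n+1} - \vect{\gamma} ) }{i} = \sum_{\svA \in \stateSpace} \matrixElement{A}{\svA}{i} ( \iterand{\vectComponent{\hat{\Pi}}{\svA}}{n+1} - \vectComponent{\alpha}{\svA} )$, using $\vect{\gamma} = \transpose{A} \vect{\alpha}$ with $\vect{\alpha}$ the target distribution from \refTheorem{thm:Achievable_region}. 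Bounding each factor $| \matrixElement{A}{\svA}{i} |$ by $\max_{\svA, i} | \matrixElement{A}{\svA}{i} |$ and each difference $| \iterand{\vectComponent{\hat{\Pi}}{\svA}}{n+1} - \vectComponent{\alpha}{\svA} |$ by one (both being probabilities) over the $\cardinality{\stateSpace}$ states gives $| \vectComponent{ ( \transpose{A} \iterand{\hat{\vect{\Pi}}}{n+1} - \vect{\gamma} ) }{i} | \leq \cardinality{\stateSpace} \max_{\svA, i} | \matrixElement{A}{\svA}{i} | \leq \gradientConstant$. This is exactly the structure behind the cited bound $\pnorm{ \gradientOperatorWrt{\vect{r}} u(\vect{r}) }{2} \leq \gradientConstant$: the empirical gradient has the same form as the true gradient $\transpose{A} \vect{\pi}(\vect{r}) - \vect{\gamma}$ with $\vect{\pi}(\vect{r})$ replaced by another distribution, so the same constant applies.

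With this bound in hand, the induction closes cleanly. The base case $n = 0$ holds with equality because the empty sum $\sum_{m=1}^0 \itstep{m}$ vanishes. Assuming the bound at step $n$, the triangle-inequality estimate together with the uniform bound on the increment direction gives $| \vectComponent{\iterand{R}{n+1}}{i} | \leq | \vectComponent{\iterand{R}{0}}{i} | + \gradientConstant \sum_{m=1}^n \itstep{m} + \gradientConstant \itstep{n+1} = | \vectComponent{\iterand{R}{0}}{i} | + \gradientConstant \sum_{m=1}^{n+1} \itstep{m}$, completing the step. I do not expect a genuine obstacle here; the only point requiring care is the verification that the empirical increment $\transpose{A} \iterand{\hat{\vect{\Pi}}}{n+1} - \vect{\gamma}$ obeys the same component-wise bound $\gradientConstant$ as the true gradient, which follows purely from $\iterand{\hat{\vect{\Pi}}}{n+1}$ being a probability vector and does not rely on any stochastic properties of the sample path.
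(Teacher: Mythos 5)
Your proof is correct and follows essentially the same route as the paper: the paper also unrolls the recursion \refEquation{eqn:Online_algorithm_for_loglikelihood_for_Rd} with repeated triangle inequalities (your induction is the same thing) and bounds each component of the empirical gradient $\transpose{A} \iterand{\hat{\vect{\Pi}}}{m} - \vect{\gamma}$ by $\gradientConstant$. The only difference is that you explicitly verify this component-wise bound from $\iterand{\hat{\vect{\Pi}}}{m}$ being a probability vector, whereas the paper simply invokes the cited bound $\pnorm{ \gradientOperatorWrt{\vect{r}} u(\vect{r}) }{2} \leq \gradientConstant$; your added detail is a faithful (and slightly more careful) filling-in of that step.
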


\begin{proof}
Fix $n \in \naturalNumbers$ and use the triangle inequality repeatedly to obtain
\begin{align}
&| \vectComponent{ \iterand{R}{n} }{i} |
= | \vectComponent{ \iterand{R}{n-1} }{i} - \itstep{n} \vectComponent{ \iterand{\hat{G}}{n} }{i} |
\leq | \vectComponent{ \iterand{R}{n-1} }{i} | + \itstep{n} | \vectComponent{ \iterand{\hat{G}}{n} }{i} | \nonumber \\
&\leq | \vectComponent{ \iterand{R}{0} }{i} | + \sum_{m=1}^{n} \itstep{m} | \vectComponent{ \iterand{\hat{G}}{m} }{i} |
\leq | \vectComponent{ \iterand{R}{0} }{i} | + \gradientConstant \sum_{m=1}^{n} \itstep{m}.
\end{align}
This completes the proof.
\end{proof}

We now turn to verifying \refLemma{lem:Conditions_for_convergence}. To do so, we consider the error bias and zero-mean noise separately, similar to \cite[\S{IV-B}]{SandersBorstLeeuwaarden2012}.

\subsubsection{Error bias} 

Using \refLemma{lem:Upper_bound_for_components_of_R_n}, we find that
\begin{align}
&\sum_{n=1}^{\infty} \itstep{n} \bigl| \expectation{ \transpose{ \iterand{\vect{B}}{n} } ( \iterand{\vect{R}}{n-1} - \criticalpoint{\vect{r}} ) | \iterand{\mathcal{F}}{n-1} } \bigr| \nonumber \\
= &\sum_{n=1}^{\infty} \itstep{n} \bigl| \sum_{i=1}^d \expectation{ \iterand{\vectComponent{B}{i}}{n} | \iterand{\mathcal{F}}{n-1} } ( \iterand{\vectComponent{R}{i}}{n-1} - \criticalpoint{\vectComponent{r}{i}} ) \bigr| \nonumber \\
\leq &\sum_{n=1}^{\infty} \itstep{n} \sum_{i=1}^d \bigl| \expectation{ \iterand{\vectComponent{B}{i}}{n} | \iterand{\mathcal{F}}{n-1} } \bigr| \bigl| \iterand{\vectComponent{R}{i}}{n-1} - \criticalpoint{\vectComponent{r}{i}} \bigr| \nonumber \\
\leq &\sum_{n=1}^{\infty} \itstep{n} \sum_{i=1}^d | \iterand{\vectComponent{B}{i}}{n} | \bigl( | \iterand{\vectComponent{R}{i}}{n-1} | + | \criticalpoint{\vectComponent{r}{i}} | \bigr) \nonumber \\
\leq &\sum_{n=1}^{\infty} \itstep{n} \sum_{i=1}^d | \iterand{\vectComponent{B}{i}}{n} | \bigl( | \iterand{\vectComponent{R}{i}}{0} | + | \criticalpoint{\vectComponent{r}{i}} | + \gradientConstant \sum_{m=1}^{n-1} \itstep{m} \bigr) \nonumber \\
\leq &\max_{i=1,...,d} \bigl\{ | \iterand{\vectComponent{R}{i}}{0} |
 + | \criticalpoint{\vectComponent{r}{i}} | \bigr\} \sum_{n=1}^{\infty} \itstep{n} \sum_{i=1}^d | \iterand{\vectComponent{B}{i}}{n} | \nonumber \\
&+ \gradientConstant \sum_{n=1}^{\infty} \bigl( \itstep{n} \sum_{i=1}^d | \iterand{\vectComponent{B}{i}}{n} | \sum_{m=1}^{n-1} \itstep{m} \bigr). \label{eqn:Bound_on_sum_of_Bn_times_Rn_minus_critical_point_r}
\end{align}
This expression is similar to \cite[Eq.~(36)]{SandersBorstLeeuwaarden2012}, but applies to algorithm \refEquation{eqn:Online_algorithm_for_loglikelihood_for_Rd} instead of algorithm \refEquation{eqn:Online_algorithm_for_loglikelihood_for_general_R}. Next, we turn to bounding $| \iterand{\vectComponent{B}{i}}{n} |$ from above. Expression \cite[Eq.~(37)]{SandersBorstLeeuwaarden2012} says
\begin{align}
| \iterand{\vectComponent{B}{i}}{n} |
\leq \frac{\LipschitzConstant}{2} \sum_{\svA \in \stateSpace} \expectation{ | \iterand{\vectComponent{\hat{\Pi}}{\svA}}{n} - \vectComponent{\pi}{\svA}(\iterand{\vect{R}}{n-1}) | | \iterand{\mathcal{F}}{n-1} } \label{eqn:Error_bias__Upper_bound_after_triangle_inequality}
\end{align}
and still holds in the present case, because $| \vectComponent{g}{i}(\vect{\mu}) - \vectComponent{g}{i}(\vect{\nu}) | \leq 2 \max_{\svA, i} \{ | \matrixElement{A}{\svA}{i} | \} \totalVariation{ \vect{\mu} - \vect{\nu} } = \LipschitzConstant \totalVariation{ \vect{\mu} - \vect{\nu} }$ for $i = 1, ..., d$ \cite[\S{III-B}]{SandersBorstLeeuwaarden2012}. When using algorithm \refEquation{eqn:Online_algorithm_for_loglikelihood_for_Rd}, however, we cannot proceed and apply \cite[Lemma~7]{SandersBorstLeeuwaarden2012}. Instead, we will use \refLemma{lem:Probabilistic_bound_on_measuring_the_equilibrium_distribution}, the proof of which can be found in \refAppendixSection{sec:Proof_of__Probabilistic_bound_on_measuring_the_equilibrium_distribution}. We derive \refLemma{lem:Probabilistic_bound_on_measuring_the_equilibrium_distribution} using a large deviations result in \cite{CattiauxGuillin2006}, and we note that it is related to the mixing time of the underlying stochastic process, see also \cite[Thm.~12.4]{LPW08}.

\begin{lemma}
\label{lem:Probabilistic_bound_on_measuring_the_equilibrium_distribution}
There exist $c_1$, $c_2$, $c_3$, $c_4 \in (0,\infty)$, so that for $\iterror{n} \in [0,1]$ and $\svA \in \stateSpace$,
\begin{align}
&\probability{ \bigl| \iterand{\vectComponent{\hat{\Pi}}{\svA}}{n} - \vectComponent{\pi}{\svA}(\iterand{\vect{R}}{n-1}) \bigr| \geq \iterror{n} } \label{eqn:Probability_bound_on_pi_measurement} \\
&\leq c_{1} \exp{ \bigl( c_{2} \sum_{m=1}^n \itstep{m} - c_3 \frac{ ( \iterror{n} )^2 }{ \itfreq{n} } \exp{ \bigl( - c_4 \sum_{m=1}^n \itstep{m} \bigr) } \bigr) } \nonumber.
\end{align}
\end{lemma}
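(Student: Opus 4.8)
The plan is to interpret $\iterand{\vectComponent{\hat{\Pi}}{\svA}}{n}$ as a normalized additive (ergodic) functional of the reversible Markov process $\iterand{Z}{n-1}$ over an observation window of length $1/\itfreq{n}$, and to apply a concentration inequality for such functionals. Writing $f = \indicator{ \cdot = \svA }$, we have $\equilibriumExpectation{f} = \vectComponent{\pi}{\svA}(\iterand{\vect{R}}{n-1})$ and $\pnorm{f}{\infty} = 1$, so $\iterand{\vectComponent{\hat{\Pi}}{\svA}}{n} - \vectComponent{\pi}{\svA}(\iterand{\vect{R}}{n-1})$ is exactly the deviation of the empirical time-average from its equilibrium value. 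The deviation inequality of Cattiaux and Guillin \cite{CattiauxGuillin2006} for reversible ergodic processes furnishes a bound of the form $C \exp(-c\, \lambda\, T\, \varepsilon^2)$, in which $\lambda$ is the spectral gap of the generator associated with $\iterand{\vect{R}}{n-1}$, $T = 1/\itfreq{n}$ is the observation length, $\varepsilon = \iterror{n}$ is the tolerance, and the prefactor $C$ carries the correction for starting from a fixed state rather than from equilibrium. Restricting to $\iterror{n} \in [0,1]$ keeps $f$ and $\varepsilon$ in the sub-Gaussian regime $\varepsilon \leq \pnorm{f}{\infty}$ where the quadratic-in-$\varepsilon$ bound applies.

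First I would make explicit the dependence of $C$ on the starting distribution. Since $\iterand{Z}{n-1}$ is launched from the (deterministic) state reached at the end of the previous period, the Radon--Nikodym factor $h = d\mu/d\pi$ entering $C$ is a point mass normalized by $\pi$, so $C$ scales like a negative power of $\vectComponent{\pi}{\min}(\iterand{\vect{R}}{n-1})$, where $\vectComponent{\pi}{\min}$ denotes the smallest stationary probability. This prefactor is what will ultimately become $c_1 \exp(c_2 \sum_{m=1}^n \itstep{m})$.

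Second---and this is the crux---I would control the parameter dependence of both $\vectComponent{\pi}{\min}$ and $\lambda$ using \refLemma{lem:Upper_bound_for_components_of_R_n}, which bounds each component by $|\vectComponent{\iterand{R}{0}}{i}| + \gradientConstant \sum_{m=1}^{n-1} \itstep{m}$. Since every configurable rate is $\exp(\vectComponent{r}{i})$ and the remaining rates are fixed, all generator entries lie between $\exp(-K - \gradientConstant \sum_{m} \itstep{m})$ and $\exp(K + \gradientConstant \sum_m \itstep{m})$ for a constant $K$ depending only on the initial parameters and the non-configurable rates. From the product form \refEquation{eqn:Product_form_distribution__ExpAb_form} I would then derive $\vectComponent{\pi}{\min}(\iterand{\vect{R}}{n-1}) \geq \exp(-c_2 \sum_{m=1}^n \itstep{m})$ for a suitable $c_2 \in \strictlyPositiveRealNumbers$, and from a conductance (Cheeger) lower bound on the spectral gap of a finite reversible chain---valid because $\stateSpace$ is finite and the chain stays irreducible for every parameter value---I would derive $\lambda(\iterand{\vect{R}}{n-1}) \geq \exp(-c_4 \sum_{m=1}^n \itstep{m})$ for a suitable $c_4 \in \strictlyPositiveRealNumbers$. (The crude overcounting of $\sum_{m=1}^{n-1}$ by $\sum_{m=1}^n$ is harmless.)

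Finally I would substitute $T = 1/\itfreq{n}$, $\varepsilon = \iterror{n}$, $\pnorm{f}{\infty} = 1$, the prefactor bound $C \leq c_1 \exp(c_2 \sum_{m=1}^n \itstep{m})$, and the gap bound $\lambda \geq \exp(-c_4 \sum_{m=1}^n \itstep{m})$ into the Cattiaux--Guillin inequality. The exponent $-c\, \lambda\, T\, \varepsilon^2$ then becomes $-c_3 \frac{(\iterror{n})^2}{\itfreq{n}} \exp(-c_4 \sum_{m=1}^n \itstep{m})$, which together with the prefactor yields exactly \refEquation{eqn:Probability_bound_on_pi_measurement}; the detailed computation is deferred to \refAppendixSection{sec:Proof_of__Probabilistic_bound_on_measuring_the_equilibrium_distribution}. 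The main obstacle I anticipate is the second step: securing clean, uniform-in-$n$ exponential lower bounds on the spectral gap and on $\vectComponent{\pi}{\min}$ along the entire parameter trajectory. This is precisely where the non-compact parameter set $\mathcal{R} = \realNumbers^d$ bites, and it is the source of the correction factors $\exp(\pm c \sum_{m} \itstep{m})$ that distinguish this bound from the compact-parameter estimate used in \cite{SandersBorstLeeuwaarden2012}.
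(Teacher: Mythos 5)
Your proposal is correct and matches the paper's own proof in all essentials: the paper likewise starts from the Cattiaux--Guillin-based deviation bound (imported from \cite{SandersBorstLeeuwaarden2012}, Appendix D), whose prefactor is $( \iterand{\vectComponent{\pi}{\min}}{n} )^{-1/2}$ and whose exponent is proportional to $(\iterror{n})^2 / \itfreq{n}$ times an inverse Poincar\'e-type constant, and it then uses \refLemma{lem:Upper_bound_for_components_of_R_n} together with the product form \refEquation{eqn:Product_form_distribution__ExpAb_form} to convert the parameter-growth bound into $( \iterand{\vectComponent{\pi}{\min}}{n} )^{-1/2} \leq c_1 \exp \bigl( c_2 \sum_{m=1}^n \itstep{m} \bigr)$ and an $\exp \bigl( c_4 \sum_{m=1}^n \itstep{m} \bigr)$ bound on that constant. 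The only difference is in one sub-step: where you invoke a Cheeger/conductance lower bound on the spectral gap, the paper upper-bounds the Diaconis--Stroock canonical-path constant $\kappa$ directly; both routes deliver the required $\exp \bigl( -c_4 \sum_{m=1}^n \itstep{m} \bigr)$ factor and differ only in the value of $c_4$ (Cheeger roughly doubles it because the conductance is squared), which is immaterial for the lemma as stated, though it would alter the explicit threshold on $\delta$ quoted in \refProposition{prop:Sequences_that_meet_the_conditions}\textnormal{(b)}.
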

\noindent Similar to \cite[Eq.~(38)]{SandersBorstLeeuwaarden2012}, we conclude that
\begin{align}
&| \iterand{\vectComponent{B}{i}}{n} |
\leq \frac{ \LipschitzConstant \cardinality{\stateSpace} }{2} \max \{ 1, c_1 \} \bigl( \iterror{n} + \label{eqn:Bound_on_abs_Bni} \\
&\exp \bigl( c_{2} \sum_{m=1}^n \itstep{m} - c_3 \frac{ ( \iterror{n} )^2 }{ \itfreq{n} } \exp{ \bigl( - c_4 \sum_{m=1}^n \itstep{m} \bigr) } \bigr) \bigr) \nonumber
\end{align}

Substituting \refEquation{eqn:Bound_on_abs_Bni} into \refEquation{eqn:Bound_on_sum_of_Bn_times_Rn_minus_critical_point_r}, we conclude that 
\begin{align}
\sum_{n=1}^{\infty} \itstep{n} \bigl| \expectation{ \transpose{ \iterand{\vect{B}}{n} } ( \iterand{\vect{R}}{n-1} - \criticalpoint{\vect{r}} ) | \iterand{\mathcal{F}}{n-1} } \bigr| < \infty
\end{align}
with probability one if $\itstep{n}$, $\iterror{n}$ and $\itfreq{n}$ are such that
\begin{align}
&\sum_{n=1}^{\infty} \itstep{n} \bigl( \sum_{m=1}^{n-1} \itstep{m} \bigr) \bigl( \iterror{n} + \\
&\exp \Bigl( c_{2} \sum_{m=1}^n \itstep{m} - c_3 \frac{ (\iterror{n})^2 }{ \itfreq{n} } \exp{ \Bigl( - c_4 \sum_{m=1}^n \itstep{m} \Bigr) } \Bigr) \bigr) < \infty, \nonumber
\end{align}
which completes the proof of \refTheorem{thm:Convergence}.
\section{Conclusion}

We have identified the achievable region for Markov processes with product-form distributions when an operator can control one or more parameters (transition rates) of the system. The shape and size of the achievable region was shown to be intimately related with the state space and the parameters that can be controlled. Loosely speaking, the larger the state space is, the more configurable parameters there are, the more performance measures there are that can be achieved.

Future work may be aimed at relating the achievable region to parameter domains. In the present work we assumed that parameters can be arbitrarily set, while in practice operators can only choose parameters from a compact set.

We also described how to use a (distributed) online algorithm from \cite{SandersBorstLeeuwaarden2012} to find parameters such that the performance measure of the system equals any target performance measure taken from the achievable region. This required broadening the scope of applicability of the online algorithm in \cite{SandersBorstLeeuwaarden2012}, because the shape of the achievable region is only known to us when transition rates are unbounded. By capitalizing on and generalizing an existing proof methodology 
\cite{SandersBorstLeeuwaarden2012}, we have provided sufficient conditions that guarantee convergence of the algorithm when an operator can arbitrarily set parameters. Because parameter values were now assumed unbounded, the conditions on the step sizes $\itstep{n}$ and observation frequencies $\itfreq{n}$ had to be more stringent in order to lessen the risk of extreme parameter growth.

Further research on the algorithm may be aimed at studying the necessity of such stringent conditions. Less stringent conditions possibly allow for increased convergence speeds. To further substantiate this, a better understanding is required of the delicate interplay between the mixing times and the convergence properties of the algorithm.

\section*{Acknowledgement}

This research was financially supported by The Netherlands Organization for Scientific Research (NWO) in the framework of the TOP-GO program and by an ERC Starting Grant.


\bibliographystyle{abbrv}
\bibliography{Bibliography}

\appendix

\subsection{Proof of \refProposition{prop:Sequences_that_meet_the_conditions}}
\label{sec:Proof_of__Sequences_that_meet_the_conditions}

First define $\iterand{L}{n} = \sum_{m=1}^n ( m \ln{(m+1)} )^{-1}$ and $\iterand{H}{n} = \sum_{m=1}^{n} m^{-1}$ .

Let $\itstep{n}$, $\iterror{n}$ and $\itfreq{n}$ be as in \textnormal{(a)}. Condition \textnormal{(i)} is satisfied, because $\sum_{n=1}^{\infty} \itstep{n} = \sum_{n=1}^{\infty} (n \ln (n+1) )^{-1} \geq \sum_{m=2}^{\infty} ( m \ln m )^{-1} = \infty$ and $\sum_{n=1}^{\infty} (\itstep{n})^2 \leq \sum_{m=1}^{\infty} m^{-2} < \infty$. Next, we examine condition \textnormal{(ii)}. Note that
\begin{align}
\sum_{n=1}^{\infty} \itstep{n} \Bigl( \sum_{m=1}^{n-1} \itstep{m} \Bigr) \iterror{n}
\leq \sum_{n=1}^{\infty} \frac{ \iterand{H}{n-1} }{ n^{1+\alpha/2} \ln{(n+1)} } \label{eqn:Bound_on_sum_an_en_sum_am_en_for_timid_step_sizes}
\end{align}
is finite, because $\iterand{H}{n-1} \leq \ln{(n-1)} + 1 \leq \ln{(n+1)} + 1$ for all $n \geq 2$ and $\alpha > 0$. We still need to check whether
\begin{align}
&\sum_{n=1}^{\infty} \itstep{n} \bigl( \sum_{m=1}^{n-1} \itstep{m} \bigr) \exp \bigl( c_{2} \sum_{m=1}^n \itstep{m}
- c_3 \frac{ (\iterror{n})^2 }{ \itfreq{n} } \nonumber \\
&\times \exp{ \bigl( - c_4 \sum_{m=1}^n \itstep{m} \bigr) } \bigr) 
= \sum_{n=1}^{\infty} \frac{ \iterand{L}{n-1} }{ n \ln{(n+1)} } \exp \bigl( c_{2} L_{n} \nonumber \\ 
&- c_3 n^{\delta - \alpha} \exp{ \bigl( - c_4 \iterand{L}{n} \bigr) } \bigr)
\label{eqn:Summation_part_with_exponential_for_timid_step_sizes}
\end{align}
is finite for all $c_2$, $c_3$, $c_4 \in \strictlyPositiveRealNumbers$. For this, we note that 
\begin{align}
&\iterand{L}{n} 
= \frac{1}{ \ln 2 } + \sum_{m=2} \frac{1}{m \ln (m+1)}
\leq \frac{1}{ \ln 2 } + \sum_{m=2} \frac{1}{m \ln m} \nonumber \\ 
&\leq \frac{3}{ 2 \ln 2 } + \int_2^n \frac{\textnormal{d}x}{\ln x} 
= \ln( \ln n ) + \frac{3}{ 2 \ln 2 } - \ln( \ln 2 ).
\end{align}
Define $c_L = 3 / ( 2 \ln 2 ) - \ln( \ln 2 )$. Consider the summand of \refEquation{eqn:Summation_part_with_exponential_for_timid_step_sizes} and upper bound it for all $n \geq 3$ by writing
\begin{align}
&\frac{ \iterand{L}{n-1} }{ n \ln{(n+1)} } \exp \bigl( c_{2} L_{n} - c_3 n^{\delta - \alpha} \exp{ \bigl( - c_4 \iterand{L}{n} \bigr) } \bigr) \nonumber \\
&\leq \frac{ \ln( \ln (n-1) ) + c_L }{ n \ln{(n+1)} } \exp \bigl( c_{2} L_{n} - c_3 n^{\delta - \alpha} \exp{ \bigl( - c_4 \iterand{L}{n} \bigr) } \bigr) \nonumber \\
&\leq \frac{1 + c_L}{ n } \exp \bigl( c_{2} L_{n} - c_3 n^{\delta - \alpha} \exp{ \bigl( - c_4 \iterand{L}{n} \bigr) } \bigr).
\end{align}
We next upper bound the exponential. Because the exponential function is monotonically increasing and $c_2$, $c_3$, $c_4 > 0$, we can upper bound again by
\begin{align}
&\leq \frac{ \e{c_2 c_L} ( 1 + c_L ) }{ n } ( \ln n )^{c_2} \exp{ \bigl( - c_3 \e{- c_4 c_L} n^{\delta - \alpha} ( \ln n )^{-c_4} \bigr) }. \nonumber
\end{align}
Now note that for any $\epsilon > 0$, $\ln n \leq n^\epsilon$ for all $n$ sufficiently large, implying that
\begin{align}
&\leq \frac{ \e{c_2 c_L} ( 1 + c_L ) }{ n } ( \ln n )^{c_2} \exp{ \bigl( - c_3 \e{c_4 c_L} n^{\delta - \alpha - c_4 \epsilon} \bigr) }
\end{align}
for all $n$ sufficiently large. Define $\varepsilon = \delta - \alpha - c_4 \epsilon$ and choose $\epsilon \in (0, ( \delta - \alpha - 1 ) / c_4 ]$, so that $\varepsilon \geq 1$. We have created a sequence that is an upper bound for \refEquation{eqn:Summation_part_with_exponential_for_timid_step_sizes} and converges absolutely, which can be seen using the ratio test,
\begin{align}
&\lim_{n \rightarrow \infty} \Bigl| \Bigl( \frac{ \ln(n+1) }{ \ln n } \Bigr)^{c_2} \frac{n}{n+1} \e{ c_3 \e{ - c_4 c_L } ( n^{\varepsilon} - (n+1)^{\varepsilon} ) } \Bigr| \nonumber \\
&=
\begin{cases}
1 & \textrm{ if } \varepsilon < 1, \\
\e{-c_3 \e{ - c_4 c_L }} & \textrm{ if } \varepsilon = 1, \\
0 & \textrm{ if } \varepsilon > 1.
\end{cases}
\end{align}

Next, let $\itstep{n}$, $\iterror{n}$ and $\itfreq{n}$ be as in \textnormal{(b)}. Verification of condition \textnormal{(i)} is immediate. To verify condition \textnormal{(ii)}, we first note that $\sum_{n=1}^{\infty} \itstep{n} \Bigl( \sum_{m=1}^{n-1} \itstep{m} \Bigr) \iterror{n} = \sum_{n=1}^{\infty} n^{-1-\alpha/2} < \infty$. What remains is to check whether
\begin{align}
&\sum_{n=1}^{\infty} \itstep{n} \bigl( \sum_{m=1}^{n-1} \itstep{m} \bigr) \exp \bigl( c_{2} \sum_{m=1}^n \itstep{m}
- c_3 \frac{ (\iterror{n})^2 }{ \itfreq{n} } \nonumber \\
&\times \exp{ \bigl( - c_4 \sum_{m=1}^n \itstep{m} \bigr) } \bigr) 
= \sum_{n=1}^{\infty} \frac{ \iterand{H}{n-1} }{ n } \exp \bigl( c_{2} \iterand{H}{n} \nonumber \\ 
&- c_3 n^{\delta-\alpha} \Bigl( \frac{ \ln n + 1 }{ \iterand{H}{n-1} } \Bigr)^2 \exp{ \bigl( - c_4 \iterand{H}{n} \bigr) } \bigr)
\end{align}
is finite. For $n \geq 2$, the summand can be upper bounded by
\begin{align}
&\frac{ \iterand{H}{n-1} }{ n } \exp \bigl( c_{2} \iterand{H}{n} - c_3 n^{\delta-\alpha} \Bigl( \frac{ \ln n + 1 }{ \iterand{H}{n-1} } \Bigr)^2 \exp{ \bigl( - c_4 \iterand{H}{n} \bigr) } \bigr) \nonumber \\ 
&\leq \e{ c_2 } n^{c_2-1} ( \ln n + 1 ) \exp{ \bigl( - c_3 \e{ - c_4 } n^{\delta - \alpha - c_4} \bigr) }.
\end{align}
After defining $\varepsilon = \delta - \alpha - c_4 \geq 1$, we conclude using the ratio test that
\begin{align}
&\lim_{n \rightarrow \infty} \Bigl| \frac{ \ln(n+1) }{ \ln n } \Bigl( \frac{n+1}{n} \Bigr)^{c_2-1} \e{ c_3 \e{- c_4} ( n^{\varepsilon} - (n+1)^{\varepsilon} ) } \Bigr| \nonumber \\
&=
\begin{cases}
1 & \textrm{ if } \varepsilon < 1, \\
\e{-c_3 \e{ - c_4 c_L }} & \textrm{ if } \varepsilon = 1, \\
0 & \textrm{ if } \varepsilon > 1.
\end{cases}
\end{align}
This proves absolute convergence, since $\varepsilon \geq 1$. \QuodEratDemonstrandum

\subsection{Proof of \refLemma{lem:Probabilistic_bound_on_measuring_the_equilibrium_distribution}}
\label{sec:Proof_of__Probabilistic_bound_on_measuring_the_equilibrium_distribution}

Consider the following setup, similar to that in \cite{DS91} for discrete-time Markov chains. Define a graph $G=(V,E)$, where $V$ denotes the vertex set in which each vertex corresponds to a state in $\stateSpace$ and $E$ denotes the set of directed edges. An edge $(\svA,\svB)$ is in $E$ if and only if $\phi(e) = \vectComponent{\pi}{\svA} \matrixElement{Q}{\svA}{\svB} = \vectComponent{\pi}{\svB} \matrixElement{Q}{\svB}{\svA} > 0$. Here, $Q$ denotes the generator matrix of $\process{X(t)}{t \geq 0}$. For every pair of distinct vertices $\svA, \svB \in \stateSpace$, choose a path $\gamma_{\svA,\svB}$ (along the edges of $G$) from $\svA$ to $\svB$. Paths may have repeated vertices but a given edge appears at most once in a given path. Let $\Gamma$ denote the collection of all paths (one for each ordered pair $\svA, \svB$). Irreducibility of $\process{X(t)}{t \geq 0}$ guarantees that such paths exists. For $\gamma_{\svA,\svB} \in \Gamma$ define the path length by $\pnorm{\gamma_{\svA,\svB}}{\phi} = \sum_{ e \in \gamma_{\svA,\svB} } \phi(e)^{-1}$. Also, let $\kappa = \max_{e} \sum_{ \{ \gamma_{\svA,\svB} \in \Gamma | e \in \gamma_{\svA,\svB} \} } \pnorm{\gamma_{\svA,\svB}}{\phi} \vectComponent{\pi}{\svA} \vectComponent{\pi}{\svB}$. 

With this set-up, the upper bound $\probability{ \bigl| \iterand{\vectComponent{\hat{\Pi}}{\svA}}{n} - \vectComponent{\pi}{\svA}(\iterand{\vect{R}}{n-1}) \bigr| \geq \iterror{n} }
\leq ( \iterand{\vectComponent{\pi}{\min}}{n} )^{-\frac{1}{2}} \exp{ ( - (\iterror{n})^2 / 4 \kappa \cardinality{ \stateSpace }^2 \itfreq{n} ) }$,
where $\iterand{\vectComponent{\pi}{\min}}{n} = \min_{\svA \in \stateSpace} \vectComponent{\pi}{\svA}(\iterand{\vect{R}}{n})$, has been derived in \cite[Appendix~D]{SandersBorstLeeuwaarden2012}. We will show that \refLemma{lem:Probabilistic_bound_on_measuring_the_equilibrium_distribution} can be derived using this upper bound, by calculating a lower bound on $\iterand{\vectComponent{\pi}{\min}}{n}$ and an upper bound on $\kappa$ when using the online algorithm in \refEquation{eqn:Online_algorithm_for_loglikelihood_for_Rd}. 

\subsubsection{Bounding $\iterand{\vectComponent{\pi}{\min}}{n}$}

By non-negativity of $\exp{(\cdot)}$,
\begin{align}
&\iterand{\vectComponent{\pi}{\min}}{n}
\geq \frac{ \min_{\svA \in \stateSpace} \bigl\{ \exp{ \vectComponent{ ( A \iterand{\vect{R}}{n} + \vect{b} ) }{\svA} } \bigr\} }{ \cardinality{\stateSpace} \max_{\svA \in \stateSpace} \bigl\{ \exp{ \vectComponent{ ( A \iterand{\vect{R}}{n} + \vect{b} ) }{\svA} } \bigr\} }.
\end{align}
Then note that
\begin{align}
&\min_{\svA \in \stateSpace} \bigl\{ \exp{ \vectComponent{ ( A \iterand{\vect{R}}{n} + \vect{b} ) }{\svA} } \bigr\} 
= \exp{ \bigl( \min_{\svA \in \stateSpace} \bigl\{ \vectComponent{ ( A \iterand{\vect{R}}{n} + \vect{b} ) }{\svA} \bigr\} \bigr) } \nonumber \\
&\geq \exp{ \Bigl( \min_{\svA \in \stateSpace} \{ \vectComponent{b}{\svA} \} + \min_{\svB \in \stateSpace} \bigl\{ \sum_{i=1}^d \matrixElement{A}{\svB}{i} \vectComponent{\iterand{R}{n}}{i} \bigr\} \Bigr) }
\geq \exp \Bigl( \min_{\svA \in \stateSpace} \{ \vectComponent{b}{\svA} \} \nonumber \\
&- \max_{\svB \in \stateSpace} \bigl\{ \sum_{i=1}^d | \matrixElement{A}{\svB}{i} | | \vectComponent{\iterand{R}{n}}{i} | \bigr\} \Bigr)
\geq \exp \Bigl( \min_{\svA \in \stateSpace} \{ \vectComponent{b}{\svA} \} \nonumber \\ 
&- \max_{i=1,...,d} \bigl\{ | \vectComponent{\iterand{R}{0}}{i} | + \gradientConstant \sum_{m=1}^{n} \itstep{m} \bigr\} \max_{\svB \in \stateSpace} \pnorm{ \matrixRow{A}{\svB} }{1} \Bigr). \label{eqn:Bound_on_min_exp_ARplusB}
\end{align}
Here, $\max_{\svB \in \stateSpace} \pnorm{ \matrixRow{A}{\svB} }{1}$ denotes the maximum absolute row sum of $A$. Similar to \refEquation{eqn:Bound_on_min_exp_ARplusB}, $\max_{\svA \in \stateSpace} \bigl\{ \exp{ \vectComponent{ ( A \iterand{\vect{R}}{n} + \vect{b} ) }{\svA} } \bigr\}$ can be upper bounded, and one finds that
\begin{align}
&\iterand{\vectComponent{\pi}{\min}}{n}
\geq \frac{1}{ \cardinality{\stateSpace} } \exp \Bigl( \min_{\svA \in \stateSpace} \{ \vectComponent{b}{\svA} \} - \max_{\svA \in \stateSpace} \{ \vectComponent{b}{\svA} \} \nonumber \\
&- 2 \max_{i=1,...,d} \bigl\{ | \vectComponent{\iterand{R}{0}}{i} | + \gradientConstant \sum_{m=1}^{n} \itstep{m} \bigr\} \max_{\svB \in \stateSpace} \pnorm{ \matrixRow{A}{\svB} }{1} \Bigr).
\end{align}


To summarize, there exist $c_1^2 = \cardinality{\stateSpace} \exp \bigl( \max_{\svA \in \stateSpace} \{ \vectComponent{b}{\svA} \} - \min_{\svA \in \stateSpace} \{ \vectComponent{b}{\svA} \} + 2 \max_{i=1,...,d} \bigl\{ | \vectComponent{\iterand{R}{0}}{i} | \bigr\} \max_{\svB \in \stateSpace} \pnorm{ \matrixRow{A}{\svB} }{1} \bigr) > 0$ and $c_2 = \gradientConstant \max_{\svB \in \stateSpace} \pnorm{ \matrixRow{A}{\svB} }{1} > 0$ so that
\begin{align}
( \iterand{\vectComponent{\pi}{\min}}{n} )^{ - \frac{1}{2} }
\leq c_1 \exp{ \bigl( c_2 \sum_{m=1}^n \itstep{m} \bigr) }.
\label{eqn:Upper_bound_on_1_over_pimin}
\end{align}

\subsubsection{Bounding $\kappa$}

From the definition of $\kappa$, it follows that
\begin{align}
&\kappa 
\leq \sum_{e \in E} \sum_{ \{ \gamma_{\svA,\svB} \in \Gamma | e \in \gamma_{\svA,\svB} \} } \pnorm{\gamma_{\svA,\svB}}{\phi}
= \sum_{\svA, \svB \in \stateSpace} \sum_{ e \in \gamma_{\svA,\svB} } \pnorm{\gamma_{\svA,\svB}}{\phi} \nonumber \\
&= \sum_{\svA, \svB \in \stateSpace} \cardinality{ \gamma_{\svA,\svB} } \pnorm{\gamma_{\svA,\svB}}{\phi}
\leq \cardinality{G}^2 \cardinality{E} \max_{\svA,\svB \in \stateSpace} \{ \pnorm{\gamma_{\svA,\svB}}{\phi} \},
\end{align}
and from the definition of the path length it follows that
\begin{align}
&\pnorm{\gamma_{\svA,\svB}}{\phi} 
= \sum_{ e = \vectInLine{ \svC, \svD } \in \gamma_{\svA,\svB} } \frac{1}{ \vectComponent{\pi}{\svC} \matrixElement{Q}{\svC}{\svD} } 
\leq \frac{|E|}{ \iterand{\vectComponent{\pi}{\min}}{n} } \max_{\svC,\svD \in \stateSpace} \Bigl\{ \frac{1}{ \matrixElement{Q}{\svC}{\svD} } \Bigr\}. \nonumber
\end{align}

Recall that $\vectComponent{\iterand{R}{n}}{i}$ corresponds to the logarithm of a rate, i.e.~$\vectComponent{\iterand{R}{n}}{i} = \ln \matrixElement{Q}{\svA}{\svB}$ for some $\svA, \svB \in \stateSpace$. It follows that for $c_{Q} = \max_{ \{ \svC,\svD \in \stateSpace | \not\exists_{i \in \{1, ..., d\}} \vectComponent{\iterand{R}{n}}{i} = \ln \matrixElement{Q}{\svC}{\svD} \} } \{ \matrixElement{Q}{\svC}{\svD}^{-1} \} < \infty$,
\begin{align}
&\max_{\svC,\svD \in \stateSpace} \Bigl\{ \frac{1}{ \matrixElement{Q}{\svC}{\svD} } \Bigr\}
\leq \max_{i = 1, ..., d} \bigl\{ \e{ - \vectComponent{\iterand{R}{n}}{i} } \bigr\} + c_{Q} \leq \max_{i = 1, ..., d} \bigl\{ \e{ | \vectComponent{\iterand{R}{n}}{i} | } \bigr\} \nonumber \\
&+ c_{Q} \leq \max_{i = 1, ..., d} \bigl\{ \e{ | \vectComponent{\iterand{R}{0}}{i} | } \bigr\} \e{ \gradientConstant \sum_{m=1}^{n} \itstep{m} } + c_{Q}.
\end{align}

Using \refEquation{eqn:Upper_bound_on_1_over_pimin}, $\kappa \leq c_1^2 \cardinality{G}^2 \cardinality{E}^2 \exp{ ( 2 c_2 \sum_{m=1}^n \itstep{m} ) } \times ( \max_{i} \{ \exp{ ( | \vectComponent{\iterand{R}{0}}{i} | ) } \} \e{ \gradientConstant \sum_{m=1}^{n} \itstep{m} } + c_{Q} ) \leq c_1^2 \cardinality{G}^2 \cardinality{E}^2 \times ( \max_{i} \{ \e{ | \vectComponent{\iterand{R}{0}}{i} | } \} + c_{Q} ) \exp{ ( ( \gradientConstant + 2 c_2 ) \sum_{m=1}^{n} \itstep{m} ) }$,
or equivalently $\kappa \leq \frac{2}{ \cardinality{\stateSpace} c_3 } \exp{ ( c_{4} \sum_{m=1}^n \itstep{m} ) }$, where 
\begin{align}
c_3
= \frac{2}{ \cardinality{\stateSpace} c_1^2 \cardinality{G}^2 \cardinality{E}^2 } \Bigl( \max_{i = 1, ..., d} \Bigl\{ \e{ | \vectComponent{\iterand{R}{0}}{i} | } \Bigr\} + c_{Q} \Bigr)^{-1}
> 0
\end{align}
and $c_{4} = \gradientConstant + 2 c_2 = \gradientConstant ( 1 + 2 \max_{\svB \in \stateSpace} \pnorm{ \matrixRow{A}{\svB} }{1} )
> 0$. Noting that $\cardinality{G} = \cardinality{\stateSpace} < \infty$ and $\cardinality{E} \leq \cardinality{\stateSpace} ( \cardinality{\stateSpace} - 1 ) / 2 < \infty$ completes the proof. \QuodEratDemonstrandum

\end{document}